\documentclass[a4paper]{amsart}

\newcommand{\sptitle}{Sieved Enumeration of Interval Orders and Other Fishburn Structures}
\newcommand{\ptitle}{\sptitle}

\usepackage{bm} 
\usepackage{multirow}
\usepackage[T1]{fontenc}
\usepackage{lmodern}
\usepackage{amssymb,amsmath}
\usepackage{color}
\usepackage[utf8]{inputenc}
\usepackage[unicode=true]{hyperref}
\usepackage[hyperref=true,
            url=false,
            doi=false,
            isbn=false,
            backref=true,
            maxcitenames=3,
            maxbibnames=100,
            block=none]{biblatex}
\hypersetup{
  breaklinks=true,
  bookmarks=true,
  pdfauthor={Stuart Alexander Hannah},
  pdftitle={\ptitle},
  colorlinks=true,
  citecolor=blue,
  urlcolor=blue,
  linkcolor=blue,
  pdfborder={0 0 0}}
\urlstyle{same}  
\setlength{\parindent}{0pt}
\setlength{\parskip}{6pt plus 2pt minus 1pt}
\setlength{\emergencystretch}{3em}  
\setcounter{secnumdepth}{0}

\newtheorem{theorem}{Theorem}

\newtheorem{proposition}[theorem]{Proposition}
\newtheorem{corollary}[theorem]{Corollary}

\newtheorem{question}[theorem]{Question}
\theoremstyle{definition}
\newtheorem{definition}[theorem]{Definition}
\newtheorem{remark}[theorem]{Remark}
\newtheorem{example}[theorem]{Example}

\usepackage{tikz}
\usetikzlibrary{matrix,arrows,patterns}
\newcommand{\nodestyle}{
  \tikzstyle{every node} = [font=\small];
}
\newcommand{\discstyle}{
  \tikzstyle{wht} =
    [ circle,fill=white,draw=black, minimum size=4.5pt, inner sep=0pt ];
}
\newcommand{\style}{
  \nodestyle
  \discstyle
}
\newcommand{\pattern}[4]{
  \raisebox{0.6ex}{
  \begin{tikzpicture}
    [scale=0.35, baseline=(current bounding box.center), #1]
    \foreach \x/\y in {#4}
      \fill[pattern=north east lines, pattern color=black!45] 
      (\x,\y) rectangle +(1,1);
    \draw (0.01,0.01) grid (#2+0.99,#2+0.99);
    \foreach \x/\y in {#3}
      \filldraw (\x,\y) circle (5pt);
  \end{tikzpicture}}\;
}                                                                                                                                              

\newcommand{\etal}{et al.\ }
\newcommand{\F}{\mathcal{F}}
\newcommand{\sym}{\mathfrak{S}}

\newcommand{\meshpata}{\sigma }

\newcommand{\qfact}[2]{\bm{({#1})_{#2}!}}


\DeclareMathOperator{\asc}{asc}

\DeclareMathOperator{\Pre}{Pred}
\DeclareMathOperator{\pre}{pred}
\DeclareMathOperator{\Suc}{Succ}
\DeclareMathOperator{\suc}{succ}

\newcommand{\mislabeling}[1]{mislabeling#1}
\newcommand{\mislabelings}{\mislabeling{s}}

\newcommand{\red}{\color{red}}
\newcommand{\blue}{\color{cyan}} 

\newcommand{\oeis}[1]{\href{http://oeis.org/#1}{#1}}

\addbibresource{ref.bib}


\begin{document}
  \title[\sptitle]{\ptitle}
  \author{Stuart A. Hannah}
  \address{Computer and Information Sciences\\ Livingstone
    Tower\\University of Strathclyde\\Glasgow\\Scotland}
  \date{\today}
  \subjclass[2000]{Primary 05A15}
  \keywords{interval order, 2+2-free poset, Fishburn, permutation pattern}
  \begin{abstract}
    Following a result of Eriksen and Sj\"{o}strand (2014) we detail a
    technique to construct structures following the Fishburn
    distribution from appropriate Mahonian structures.

    This technique is introduced on a bivincular pattern of
    Bousquet-M\'elou \etal~(2010) and then used to introduce a
    previously unconsidered class of matchings; explicitly, zero
    alignment matchings according to the number of arcs which are both
    right-crossed and left nesting.

    We then define a statistic on the factorial posets of Claesson and
    Linusson~(2011) counting the number of features which we refer to
    as \mislabelings{} and demonstrate that according to the number of
    \mislabelings{} that factorial posets follow the Fishburn
    distribution.

    As a consequence of our approach we find an identity for the
    Fishburn numbers in terms of the Mahonian numbers.
  \end{abstract}

  \maketitle

    \section{Introduction}
  Eriksen and Sj\"{o}strand~\cite{equidistributed} provide a
  remarkable refinement of Zagier's formula for certain Fishburn
  structures. It follows from their results that the following
  patterns are equidistributed in permutations:
  \begin{center}\label{pattern-intro}
  \pattern{}{3}{1/2,2/3,3/1}{1/0,1/1,1/2,1/3,0/1,2/1,3/1},
  \pattern{}{3}{1/3,2/2,3/1}{1/0,1/1,1/2,1/3,0/1,2/1,3/1}
  and 
   \pattern{}{3}{1/1,2/3,3/2}{1/0,1/1,1/2,1/3,0/1,2/1,3/1}.
  \end{center}
  Furthermore they prove that the above patterns are equidistributed
  with the number of right nestings in non-left nesting
  matchings. They show the distribution is given by the coefficients
  $f_{n,k}$ of the following ordinary generating function,
  \begin{align*}
    \sum_{n=0} \sum_{\pi \in \sym_n} x^n y^{\meshpata({\pi})}
    & =  \sum_{n=0} \sum_{k=0} f_{n,k} x^n y^k \\
    &= \sum_{m=0} (-1)^m \prod_{i=1}^m \frac{(1+ (y-1)x)^i-1}{1-y}.
  \end{align*}

  We refer to this as the \emph{Fishburn distribution}.

  Observing that the above equation can be written in terms of a
  substitution of the $q$-factorial $\qfact{n}{q}$ as
  \begin{align*}\label{main-eq}
    \sum_{n \geq 0} \qfact{n}{x(y-1)+1}x^n,
  \end{align*}
  allows for a new approach for the enumeration of Fishburn
  structures. This makes explicit a link between Fishburn and Mahonian
  structures.  In this paper we demonstrate this technique on three
  structures.  We first recreate Eriksen and Sj\"{o}strand's result on
  occurrences in permtuations of the first of the above patterns.  We
  then introduce a new Fishburn structure showing that the number of
  arcs which are both left-nesting and right-crossed in matchings with
  no alignments follow the Fishburn distribution.  Then we identify a
  new feature on the factorial posets of Claesson and
  Linusson~\cite{factorial-matchings} which we name \mislabelings{}.
  A factorial poset with zero \mislabelings{} satisfies a condition of
  Claesson and Linusson giving that the poset is a canonically labeled
  interval order, thus allowing us to recreate a result of
  Bousquet-M\'elou \etal~\cite{two-plus-two-free}.

  As a consequence of the relationship to the $q$-factorial we 
  provide a new identity for coefficients $f_{n,k}$ of the Fishburn
  distribution with respect to the Mahonian numbers $m_{n,k}$
  (\oeis{A008302}), that
  $$ f_{n,k}= \sum_{i=k}^{n-2} (-1)^{i+k}{i \choose k} \sum_{j=i}^{n -i
     \choose 2} {j \choose i} m_{n-i,j}.$$

  Of particular interest is when $k=0$, which gives an identity for
  the $n$th Fishburn number (\oeis{A022493})
  $$\sum_{i=0}^{n-2} (-1)^{i} \sum_{j=i}^{n -i \choose 2} {j
    \choose i} m_{n-i,j}.$$

  \newcommand{\stat}{\psi}
\newcommand{\feature}{feature}
\newcommand{\features}{\feature{}s}
\section{Terminology and background}

For $a,b \in \mathbb{Z}$ with $a < b$ let $[b]$ denote the set $\{1,
\dots, b \}$ and $[a,b]$ the set $\{a, \dots, b \}$. 

For $U$ a linearly ordered set with $x \in U$ where $x$ is \emph{not}
the maximal element of $U$ then, where there is no ambiguity, we shall
abuse notation using $x+1$ to refer to the immediate successor of $x$
in $U$.

\subsection{Mahonian numbers}
For $n \in \mathbb{N}$ let $\qfact{n}{q}$ denote the $q$-factorial,
defined as
\begin{align*}
  \qfact{n}{q} &= \prod_{i=1}^n \sum_{j = 0}^{i-1} q^j 
  = \prod_{i=1}^n \frac{1-q^i}{1-q}.
\end{align*}

The coefficients of the $q$-factorial are known as the Mahonian
numbers (\oeis{A008302}). The first few terms are shown in Figure
\ref{fig-mahonian}. We shall use $m_{n,k}$ to denote the $k$th entry
of row $n$.

Mahonian numbers derive their name from seminal work identifying
permutation statistics by Major MacMahon~\cite{macmahon}. As a result,
and particularly in the case of permutations, structures counted by
the $q$-factorial are often referred to as Mahonian structures.

\begin{figure}
  \begin{center}
    1\\
    1, 1\\
    1, 2, 2, 1\\
    1, 3, 5, 6, 5, 3, 1\\
    1, 4, 9, 15, 20, 22, 20, 15, 9, 4, 1\\
    1, 5, 14, 29, 49, 71, 90, 101, 101, 90, 71, 49, 29, 14, 5, 1\\
    \dots \\
  \end{center}
  \caption{Mahonian Triangle (\oeis{A008302}), $m_{n,k}$ }
  \label{fig-mahonian}
\end{figure}

\subsection{Permutation patterns}

A permutation is a bijection on a finite set $U$. The results in this
paper assume that there is a total order on $U$. We shall therefore
assume throughout that for $n \in \mathbb{N}$ permutations as elements
of $\sym_n$ are bijections on the set $[n]$.

For $n,k \in \mathbb{N}$ with $n > k$ take permutations $\pi \in
\sym_n$ and $\tau \in \sym_k$.  An occurrence of $\tau$ as a
\emph{classical permutation pattern} in $\pi$ is a subsequence of
$\pi$ whose entries are in the same relative order as in $\tau$. For
example taking $\tau = 132$ and $\pi =4671253$ then the following
subsequences of $\pi$ correspond to occurrences of $\tau$,
$$465 \quad 475 \quad 153 \quad 253.$$

Permutations may be represented on a grid by dots placed at line
intersects such that each line is intersected by exactly one dot. The
permutation maps the value indicated by each vertical line to the
value of the corresponding horizontal line indicated by the dot
placement.  For example, the grid below corresponds to the permutation
$4671253$.
\begin{center}
  \begin{tikzpicture}[scale=0.35, baseline=(current bounding box.center)]
    \draw (0.01,0.01) grid (7+0.99,7+0.99);
    \foreach \x in {1,2,3,4,5,6,7}{
      \draw (\x,-0.5) node {\x};
      \draw (-0.5,\x) node {\x};}
    \foreach \x [count=\i] in {4,6,7,1,2,5,3}
    \filldraw (\i,\x) circle (5pt);
  \end{tikzpicture}
\end{center}

A \emph{mesh pattern}, introduced by Br\"and\'en and Claesson
\cite{meshpatterns}, consists of a classical permutation pattern and
a (potentially empty) set of shaded boxes on the grid representation
of that pattern. An occurrence of a mesh pattern consists of an
occurrence of the underlying classical permutation such that there are
no entries of $\pi$ contained within the shaded boxes.

For example, there are two occurrences of the following mesh pattern in
$\pi$,
$$\pattern{}{3}{1/1,2/3,3/2}{1/0,1/1,1/2,1/3,0/1,2/1,3/1}.$$ namely
$465$ and $253$.  Whereas, although an occurrence of the underlying
classical pattern, $475$ is not an occurrence the above mesh pattern
as $5$ occurs between the $4$ and $7$.

A \emph{vincular pattern} is a mesh pattern where only entire columns
may be shaded out.  A \emph{bivincular pattern} is a mesh pattern
where any shaded boxes must contribute to an entire row or column of
shaded boxes. The above mesh pattern is also a bivincular pattern.

A permutation with no occurrences of a pattern is said to
\emph{avoid} that pattern.

Occurrences of the pattern 
\begin{center}
\pattern{}{2}{1/2,2/1}{},
\end{center}
are known as \emph{inversions} and are counted by the $q$-factorial
(see MacMahon~\cite{macmahon}).

\subsection{Posets}
A poset $P$ is defined as a set and an
associated binary relation $<_P$ satisfying reflexivity,
antisymmetry, and transitivity.  A poset constructed on some
linearly ordered set $U$ is said to be \emph{naturally labeled} if
$i<_P j \implies i<_U j$.

An \emph{interval order} is a poset $P$ where each $z \in P$ can be
assigned a closed interval $[l_z,r_z]\in \mathbb{R}$ such that $x<_P
y$ if and only if $r_x < l_y$. Equivalent conditions are that an
interval order is a poset whose predecessor sets can be assigned a total
order by inclusion~\cite{bogart} or that a poset is an interval
order if it has no induced subposet isomorphic to the pair of
disjoint two element chains, i.e.~the poset is
$(2+2)$-free~\cite{fishburn}.

For $i \in P$, let the following notation be used for the
predecessor and successor sets of $i$:
\begin{align*}
  \Pre{i} = \{ j \in P : j <_P i\}, \quad \quad &\pre{i} = | \Pre{i}|, \\
  \Suc{i}= \{ \ell \in P : i <_P \ell \}, \quad \quad &\suc{i} 
  = | \Suc{i}|.
\end{align*}

\subsection{Matchings}

A perfect matching of size $n$ is a fixed point free involution of
semi-length $n$. Matchings are typically represented as a set of
ordered pairs $(i,j)$ such that $i<j$ or diagrammatically as arcs on
the numberline $[2n]$. For example, the matching of size $10$
$$\{(1,10),(2,9),(3,6),(4,11),(5,7),(8,12),(13,15),(14,16)\},$$

is diagrammatically represented as

\begin{center} 
\begin{tikzpicture}[line width=.5pt,scale=0.45, font=\small]
 \filldraw (1,0) circle (1.9pt); \node at (1,-0.7){1};
 \filldraw (2,0) circle (1.9pt); \node at (2,-0.7){2};
 \filldraw (3,0) circle (1.9pt); \node at (3,-0.7){3};
 \filldraw (4,0) circle (1.9pt); \node at (4,-0.7){4};
 \filldraw (5,0) circle (1.9pt); \node at (5,-0.7){5};
 \filldraw (6,0) circle (1.9pt); \node at (6,-0.7){6};
 \filldraw (7,0) circle (1.9pt); \node at (7,-0.7){7};
 \filldraw (8,0) circle (1.9pt); \node at (8,-0.7){8};
 \filldraw (9,0) circle (1.9pt); \node at (9,-0.7){9};
 \filldraw (10,0) circle (1.9pt); \node at (10,-0.7){10};
 \filldraw (11,0) circle (1.9pt); \node at (11,-0.7){11};
 \filldraw (12,0) circle (1.9pt); \node at (12,-0.7){12};
 \filldraw (13,0) circle (1.9pt); \node at (13,-0.7){13};
 \filldraw (14,0) circle (1.9pt); \node at (14,-0.7){14};
 \filldraw (15,0) circle (1.9pt); \node at (15,-0.7){15};
 \filldraw (16,0) circle (1.9pt); \node at (16,-0.7){16};
\draw[very thin] (0.5,0) -- (16.500000,0);
 \draw (10,0) arc (0:180:4.5);
 \draw (9,0) arc (0:180:3.5);
 \draw (6,0) arc (0:180:1.5);
 \draw (11,0) arc (0:180:3.5);
 \draw (7,0) arc (0:180:1.0);
 \draw (12,0) arc (0:180:2.0);
 \draw (15,0) arc (0:180:1.0);
 \draw (16,0) arc (0:180:1.0);
\end{tikzpicture}
\end{center}

A \emph{nesting} arc in a matching is an arc which is entirely
encloses another arc when seen diagrammatically, i.e.\ an $(i,j)$ such
that there exists $(k,\ell)$ with $i<k<\ell<j$. The arc which is
enclosed is known as a \emph{nested} arc. If $k=i+1$ then the arcs are
called \emph{left-nesting} and \emph{left-nested} respectively. If
$\ell+1 = j$ then the arcs are \emph{right-nesting} and
\emph{right-nested} respectively.

A \emph{crossing} arc in a matching is the leftmost of two
intersecting arcs when seen diagrammatically, i.e.\ an $(i,j)$ such that
there exists $(k,\ell)$ with $i<k<j<\ell$. The same approach as for
nestings is taken to define \emph{crossed, left-crossing,
  left-crossed, right-crossing} and \emph{right-crossed} arcs.

An alignment in a matching is two arcs $(i,j)$ and $(k,\ell)$ such
that $i < j < k < \ell$. 

For two arcs $(i,j)$ and $(k,\ell)$, we say that $k$ is an
\emph{embraced nested opener} if $k$ is the opener for an arc nested
by $(i,j)$.

\subsection{Statistics and features}

Given some set of structures $X$ a \emph{statistic} $\stat$ is defined
as a function taking a structure to a positive integer, i.e.\ $\psi: X \to
\mathbb{Z}_+$.

A \emph{\feature{}} of a structure is a property, aspect or
substructure of a combinatorial structure. For example, an inversion
in a permutation, or a nesting in a matching.

  \subsection{Background}

Studying interval orders and permutations avoiding $\meshpata$
Bousquet-M\'elou \etal~\cite{two-plus-two-free} demonstrate that their
ordinary generating function is given by
$$\sum_{n \geq 0} \prod_{k=1}^n (1-(1-x)^k) = \qfact{n}{-x+1}x^n,$$ a
function originally considered by Zagier~\cite{zagier-chords} in
enumerating a restricted class matchings (non-neighbor nesting
matchings).

As an intermediate structure Bousquet-M\'elou \etal~ introduce
\emph{ascent sequences}, sequences of the form $b_1 b_2 \dots b_n$
defined recursively with $b_1 =0$ and $b_{i+1} \in [0,\asc{(b_1 b_2
    \dots b_i)} +1]$ where $\asc{}$ counts the total number of ascents
contained within the sequence. Ascent sequences are used to encode the
construction of interval orders via insertions of new maximal
elements. Bousquet-M\'elou \etal present a case analysis to determine
the predecessor set of the $i$th inserted element which is both
dependent on the value $b_i$ and elements previously inserted. Ascent
sequences are additionally used to construct non-neighbor nesting
matchings and permutaions avoiding $\meshpata$, thus giving bijective
correspondences between the structures.

In studying non-neighbor nesting matchings Zagier gave an asymptoic
forumla for their number $f_n$.
$$f_n \sim  n! \frac{12 \sqrt{3}}{\pi^{\frac{5}{2}}}
e^{\tfrac{\pi^2}{12}} \left( \frac{6}{\pi^2} \right )^n \sqrt{n}.$$

We refer to $f_n$ as the \emph{Fishburn numbers}. 

Taking advantage of the equivalent definition for interval orders,
that predecessor sets can be given a total order under inclusion,
Dukes, Jel\'inek and Kubitzke~\cite{comp-matrices-spec} provide an
intuitive relation between interval orders and their generating
function.  They show a simple bijection between interval orders and
the integer matrices of Dukes and
Parviainen~\cite{integer_traingular_matrices}.  An integer matrix of
size $n$ is defined as an $m \times m$, upper triangular, non-row and
non-column empty matrix with integer entries summing to $n$. Dukes,
Jel\'inek and Kubitzke show that two elements are related in an
interval order if they share a hook under the diagonal of the integer
matrix.

Claesson and Linusson~\cite{factorial-matchings} introduce the $n!$
matchings, a subset of matchings with no left-nestings enumerated by
$n!$ according to semi-length. Levande~\cite{levande-two-interp},
solving a conjecture of Claesson and Linusson, finds an additional
subset of non left-nesting matchings counted by the Fishburn numbers.

Eriksen and Sj\"{o}strand~\cite{equidistributed} provide bijections
between various Fishburn structures enumerated by $n!$---including
non-left matchings and permutations---and a class of filled partition
shapes.  In doing so they find the full Fishburn distribution for
these structures, differing from  previous work which had focused
solely on avoidance.


  \section{Original Fishburn permutation}

We lead with a previously studied example. Recall the mesh pattern
\begin{center}
$\meshpata$ = \pattern{}{3}{1/2,2/3,3/1}{1/0,1/1,1/2,1/3,0/1,2/1,3/1},
\end{center}

with avoidance originally given by Bousquet-M\'{e}lou
\etal~\cite{two-plus-two-free} and the full distribution given by
Eriksen and Sj\"ostrand~\cite{equidistributed}.

In their paper Eriksen and Sj\"ostrand show a bijection between
permutations and filled partition shapes by using the filled entries
in the partition shapes to encode the insertion of elements into an
ordered list of blocks. Upon completion the block structure is dropped
and the elements read left-to-right return the permutation.  Their
bijection allows that multiple statistics are equidistributed between
the two structures and through this they provide the non-commutative
generating function with respect to those statistics.

In this section we shall focus on a small part of their work by
considering the distribution of occurrences of $\meshpata$ in
isolation from other statistics.  This differs from the work of
Eriksen and Sj\"ostrand in that the proof is based on insertion of
entries into a permutation rather than encoding the construction. Our
application of the sieve principle is the same.

We begin with the fact that the number of inversions in permutations
follow the Mahonian distribution.  To construct a permutation of size
$n$ with $i$ marked occurrences of $\meshpata$ take a permutation of
size $n-i$ with $i$ marked inversions. Each marked inversion will be
used to insert a new entry which is the first entry of an
occurrence of $\meshpata$.  The sieve principle will be then be
applied to return those permutations strictly satisfying that
\emph{all} occurrences of $\meshpata$ are marked.

Define an order on inversions based on the position in the permutation
of the first entry in the tuple and value of the second entry in the
tuple.  For $a_1 a_2 \dots a_n$ a permutation let $(a_i,a_j)$ and
$(a_{i'}, a_{j'})$ be two, non-equal inversions. If $i = i'$ it
follows $a_i = a_{i'}$ and without loss of generality we can assume
$a_j < a_{j'}$. We then define
$$ (a_i, a_j) < (a_{i'}, a_{j'}).$$ 

Otherwise $i \neq i'$ then without loss of generality assume $i <
i'$. Then we define
$$ (a_i, a_j) < (a_{i'}, a_{j'}).$$

As an example, in the permutation $246531$ the following inversions
are sorted 
$$(4,1)<(6,1)<(6,5).$$

According to the above order each inversion $(a_j, a_k)$ is used to
insert a new entry into the permutation. Taking the position before
the leftmost entry to be position $0$, increment all $a_i > a_k $ by
one and insert $a_k + 1$ at position $j-1$. Thus an occurrence of
$\meshpata$ is created.
  
\begin{example}
  Take the permutation $2{\red 4}653{\red 1}$ where we consider
  the following inversions to be marked 
  $$(4,1)<(6,1)<(6,5).$$

  As the values in the inversions change, at each step the next
  inversion to be used will be colored in red. Inserted entries will
  be marked blue.

  The inversion $(4,1)$ is the first inversion under our defined
  order. Increase all entries greater than $1$ by $1$
  $$357641,$$
  and insert $2$ at position $1$
  $$3{\blue 2}5{\red 7}64{\red 1}.$$

  The next inversion is now labeled $(7,1)$ with the $7$ at position
  $4$. Increase all entries greater than $1$ and insert $2$ at
  position $3$
  $$4{\blue 3}6 {\blue 2} {\red 87}51.$$

  Applying the process to the final inversion, now labeled $(8,7)$,
  leads to the permutation
  $$4{\blue 3}6{\blue 2}{\blue 8}9751.$$

  Note that the inserted entries (marked blue) are all the first
  entries in an occurrence of $\meshpata$.
\end{example}

\begin{proposition}\label{proposition-permutation-bijection}
  The above procedure describes a bijection between permutations with
  marked inversions marked and permutations with the first entry in an
  occurrence of $\meshpata$ marked.
\end{proposition}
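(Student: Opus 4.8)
The plan is to exhibit the forward map $\Phi$ of the proposition together with an explicit inverse $\Psi$, and to prove they are mutually inverse by induction on the number $i$ of marks. First I would check that a single forward insertion is well-defined and lands in the stated codomain. Starting from a marked inversion $(a_j,a_k)$ with $a_j>a_k$ and $a_j$ in position $j$, incrementing every value exceeding $a_k$ by one vacates the value $a_k+1$ and sends $a_j$ to $a_j+1$; placing $a_k+1$ immediately before position $j$ then produces, read left to right, the entries $a_k+1$, then $a_j+1$, and (somewhere further right) $a_k$. These three entries realise an occurrence of $\meshpata$ whose first entry is the newly inserted value: the column shading is satisfied because $a_j+1$ occupies the position immediately following $a_k+1$, and the row shading is satisfied because $a_k$ and $a_k+1$ are consecutive integers, so no value lies strictly between them. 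Hence each insertion raises the size by one and marks exactly one new first entry, so $\Phi$ does map $\sym_{n-i}$ with $i$ marked inversions into $\sym_n$ with $i$ marked first entries.

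Next I would describe the reverse step and verify it inverts a single insertion. Given a marked first entry of value $v$ at position $p$, the shadings of $\meshpata$ force $\pi_{p+1}>v$ and force the smallest entry of the occurrence to have value $v-1$; I delete the entry at position $p$ and decrement every remaining value greater than $v-1$ by one. A direct computation shows that this returns the permutation immediately before the insertion: the former entry $\pi_{p+1}=a_j+1$ becomes $a_j$ and slides into position $p=j$, while the entry of value $v-1=a_k$ is left fixed, so the pair $(a_j,a_k)$ reappears as a marked inversion. Thus one deletion-and-decrement step is precisely the local inverse of one insertion step. I would then equip the occurrences of $\meshpata$ with the order analogous to the one defined on inversions—first by the position of the first entry, then by the value of the smallest (the ``$1$'') entry—and observe that, since the insertion sends the position of $a_j$ to (essentially) the position of $a_k+1$ and sends the second value $a_k$ to the smallest value $a_k$, the map identifies the inversion order with the occurrence order.

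With these pieces in place the bijection follows by induction on $i$. Since $\Phi$ processes inversions in increasing order, the occurrence created by the greatest marked inversion is inserted last; the order-correspondence above shows it is the greatest marked occurrence of the image, so $\Psi$ can unambiguously locate and remove it. Removing it (by the deletion-and-decrement step) recovers the permutation with $i-1$ marked inversions, which by the inductive hypothesis is in bijective correspondence with its own image under $\Phi$; combining this with the single-step inverse gives $\Psi\circ\Phi=\mathrm{id}$, and running the same argument from the codomain side gives $\Phi\circ\Psi=\mathrm{id}$, establishing the bijection.

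I expect the main obstacle to be the order-preservation, or \emph{non-interference}, lemma underlying the inductive step: I must show that a single insertion carries the remaining marked inversions to marked inversions while preserving their relative order, and dually that a single deletion carries the remaining marked first entries to valid marked first entries while preserving their order. This amounts to tracking how the position and value coordinates of every other mark transform under one insertion (a shift of all positions to the right of the insertion point, and a uniform increment of all values above the threshold $a_k$) and verifying that both the primary comparison by position and the tie-breaking comparison by value are respected, including the boundary cases where a value equals the incremented threshold. The deletion side of this statement is precisely what is needed to guarantee that $\Psi$ is defined on \emph{every} permutation with marked $\meshpata$-first-entries, and hence that $\Phi$ is surjective onto the stated codomain rather than merely injective.
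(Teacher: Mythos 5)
Your overall architecture---single-step insertion, single-step deletion as its inverse, and a two-sided induction that peels off the greatest (rightmost) marked occurrence---is sound and matches the paper's proof in spirit: the paper likewise gets injectivity from the processing order plus unique determination of the inserted entry, and surjectivity by removing the rightmost marked occurrence and standardizing. Your single-step computations (why the inserted entry begins an occurrence of $\meshpata$, and why deletion-plus-decrement undoes one insertion) are also correct.

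However, there is a genuine gap at exactly the point where the paper spends its effort. Your induction evaluates $\Psi(\Phi(\pi))$, so it presupposes that $\Phi(\pi)$ lies in the stated codomain, i.e.\ that after each later insertion the \emph{previously} created marked entries are still first entries of occurrences of $\meshpata$. That statement is not among your lemmas: your (a) concerns unprocessed \emph{inversions} surviving insertions, and your (b) concerns marks surviving \emph{deletions}. Worse, it cannot be established by the coordinate-tracking you propose (positions shift right, values above a threshold increment uniformly), because the earlier occurrence \emph{triples} are genuinely destroyed: if two marked inversions share the same first entry, the later insertion places its new entry between the first and second entries of the earlier occurrence, violating the column shading; if they share the same second value, the new entry's value lies strictly between the values of the ``1'' and the ``2'' of the earlier occurrence, violating the row shading. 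The marked entry remains valid only because the newly inserted entry itself takes over the role of the largest, respectively the smallest, entry in a \emph{new} occurrence with the same first entry. This role-replacement case analysis is the heart of the paper's well-definedness argument and is absent from your plan; a dual version of it is also needed to prove your lemma (b), since the deleted rightmost mark can itself be serving as the ``3'' or the ``1'' of another mark's occurrence.
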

\begin{proof}
  To show that this mapping is well defined we need to demonstrate
  that at each step the insertion of an entry does not remove an
  occurrence of $\meshpata$ previously inserted by this process.

 This is enforced by the ordering defined on inversions. Let
  $(a_i,a_j)$ and $(a_{i'}, a_{j'})$ be inversions. 
  \begin{enumerate}

  \item If $i = i'$ then $a_j < a_{j'}$ and therefore $ (a_i, a_j) <
    (a_{i'},a_{j'})$. Our insertion process gives that $a_{j'}+1$ is
    inserted in the position immediately following that of $a_j$ and
    thus forming an ascent with $a_j$. Furthermore as $a_{j'}>a_j$,
    the minimal entry in the occurrence of $\meshpata$ that $a_j$ is
    contained in is not incremented. Therefore the occurrence is
    preserved with the newly inserted entry $a_{j'}$ taking the role
    of the largest entry in the occurrence.

  \item If $i< i'$ then $ (a_i, a_j) < (a_{i'}, a_{j'})$. As $a_{j'}+1$ is
    inserted further to the right in the permutation the ascent that
    $a_i$ involved cannot be broken. If $a_{j} < a_{j'}$ the minimal
    entry in the occurrence of $\meshpata$ remains unchanged. If
    $a_{j} > a_{j'}$ then all entries in the occurrence of $\meshpata$
    containing $a_{j}$ are incremented.  If $a_{j} = a_{j'}$ then
    $a_{j'}$ replaces the minimal entry of the occurrence of
    $\meshpata$ containing $a_{j}$.
  \end{enumerate}

  Thus the mapping is well defined.
  To show that the mapping is a bijection we demonstrate that it is
  both injective and surjective. 

  Injectivity is enforced by the total order on inversions and that an
  inversion pair uniquely determines the entry which is inserted.

  For surjectivity note that the process we have defined inserts the
  first entries of marked occurrences of $\meshpata$ in a
  left-to-right order within the permutation. We can consider the
  reverse of the insertion operation taking a permutation with marked
  occurrences of $\meshpata$ to a permutation with marked inversions.

  Given a permutation of size $n$ with marked occurrences of
  $\meshpata$, take the rightmost marked occurrence. Removing the
  first entry contained in the occurrence and standardizing the
  permutation leaves a permutation of size $n-1$ with a marked
  inversion. Surjectivity follows from repeated application.
\end{proof}

\begin{corollary}
  Permutations with marked occurrences of $\meshpata$ 
  are given by the ordinary generating function
  $$u(x,z) = \sum_{n \geq 0} \qfact{n}{xz+1} x^n.$$
\end{corollary}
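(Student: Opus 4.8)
The plan is to combine the bijection of Proposition~\ref{proposition-permutation-bijection} with MacMahon's classical fact, already recalled above, that inversions are Mahonian, i.e.\ $\sum_{\rho \in \sym_m} q^{\inv(\rho)} = \qfact{m}{q}$. First I would fix the bookkeeping dictated by the proposition: a permutation of size $n$ carrying $i$ marked occurrences of $\meshpata$ is precisely the image of a base permutation $\rho \in \sym_{n-i}$ together with a choice of $i$ of its inversions to mark, since the insertion process consumes one marked inversion per inserted first entry. Hence, letting $x$ record the final size and $z$ record the number of marked occurrences, the coefficient of $x^n z^i$ in $u(x,z)$ counts pairs $(\rho,S)$ with $\rho \in \sym_{n-i}$ and $S$ an $i$-element subset of the inversions of $\rho$.

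Next I would sum over the total number of inversions in the base permutation. A permutation $\rho \in \sym_m$ with $\inv(\rho)$ inversions admits exactly $\binom{\inv(\rho)}{i}$ such subsets $S$, so, writing $n = m+i$,
$$u(x,z) = \sum_{m \geq 0} \sum_{i \geq 0} x^{m+i} z^i \sum_{\rho \in \sym_m} \binom{\inv(\rho)}{i}.$$
Pulling the summation over $\rho$ outward and applying the binomial theorem to collapse the inner $i$-sum, $\sum_{i \geq 0} \binom{\inv(\rho)}{i}(xz)^i = (1+xz)^{\inv(\rho)}$, reduces this to
$$u(x,z) = \sum_{m \geq 0} x^m \sum_{\rho \in \sym_m} (1+xz)^{\inv(\rho)}.$$

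Finally I would invoke the Mahonian identity with the specialization $q = xz+1$, so that $\sum_{\rho \in \sym_m} (1+xz)^{\inv(\rho)} = \qfact{m}{xz+1}$, yielding $u(x,z) = \sum_{m \geq 0} \qfact{m}{xz+1}\, x^m$, which is the claimed generating function after renaming the summation index. I do not anticipate a serious obstacle, as the proposition does the combinatorial work; the only point demanding genuine care is the index accounting — confirming that marking $i$ occurrences corresponds to selecting $i$ inversions in a base permutation that is exactly $i$ smaller, and that permutations carrying additional \emph{unmarked} inversions are counted with the correct multiplicity $\binom{\inv(\rho)}{i}$. Keeping the exponent $x^{m+i}$ rather than $x^m$, so that $x$ tracks the enlarged permutation and not the base one, is the step where an off-by-one in the size would corrupt the entire generating function.
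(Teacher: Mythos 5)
Your proof is correct and takes essentially the same route as the paper: both arguments combine Proposition~\ref{proposition-permutation-bijection} with MacMahon's result that inversions are Mahonian, and both realize the choice of marking or not marking each inversion as the substitution of $xz+1$ for $q$ in the $q$-factorial. The only difference is one of detail, since the paper asserts this substitution in a single sentence while you justify it explicitly by coefficient extraction and the binomial identity $\sum_{i} \binom{\inv(\rho)}{i}(xz)^i = (1+xz)^{\inv(\rho)}$.
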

\begin{proof}
  Permutations with respect to inversions are enumerated by the
  $q$-factorial. Under the above process an inversion is either
  marked, in which case a new entry uniquely specifying a marked
  occurrence of $\meshpata$ is inserted, or it is unmarked. This is
  equivalent to the substitution $(xz+1)$ in place of $q$ in the
  $q$-factorial with the marking of the occurrence of $\meshpata$
  denoted by $z$.
\end{proof}

 Recreating Eriksen and Sj\"{o}strand's result we now apply the sieve
 principle to permutations with \emph{subsets} of $\meshpata$ marked
 returning those with \emph{all} $\meshpata$ marked. For more details
 see Wilf~\cite[Chapter 4, Section 2]{generatingfunctionology}.

\begin{corollary}
  Permutations with respect to occurrences of $\meshpata$ are given by
  the ordinary generating function
  $$\sum_{n \geq 0} \qfact{n}{x(y-1)+1} x^n.$$
\end{corollary}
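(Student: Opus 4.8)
The plan is to obtain the stated generating function from the previous corollary's
$$u(x,z) = \sum_{n \geq 0} \qfact{n}{xz+1}\,x^n$$
by an inclusion--exclusion (sieve) argument, realised concretely as the substitution $z = y-1$.

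First I would pin down the combinatorial meaning of the coefficients. Writing $u(x,z) = \sum_{n,j} b_{n,j}\, x^n z^j$, the previous corollary tells us that $b_{n,j}$ counts permutations of size $n$ carrying a distinguished set of exactly $j$ marked occurrences of $\meshpata$, since the bijection of Proposition~\ref{proposition-permutation-bijection} identifies such markings with the marked inversions tracked through the substitution $xz+1$ in the $q$-factorial. Letting $f_{n,k}$ denote the number of permutations of size $n$ with exactly $k$ occurrences of $\meshpata$ in total, a permutation contributing to $f_{n,k}$ admits $\binom{k}{j}$ distinct choices of $j$ marked occurrences among its $k$, which yields the binomial relation
$$ b_{n,j} = \sum_{k \geq j} \binom{k}{j} f_{n,k}. $$

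Next I would invert this relation by the sieve principle, i.e.\ binomial inversion, using the identity $\sum_{j=0}^{k} \binom{k}{j} (y-1)^j = y^k$. Substituting $z = y-1$ into $u$ and interchanging the order of summation gives
$$ u(x,y-1) = \sum_{n} x^n \sum_{j} b_{n,j}(y-1)^j = \sum_{n} x^n \sum_{k} f_{n,k} \sum_{j=0}^{k}\binom{k}{j}(y-1)^j = \sum_{n,k} f_{n,k}\, x^n y^k, $$
so that $u(x,y-1)$ is precisely the generating function enumerating permutations by size and by total number of occurrences of $\meshpata$. Carrying out the same substitution $z=y-1$ on the closed form replaces the argument $xz+1$ by $x(y-1)+1$, producing
$$ \sum_{n \geq 0} \qfact{n}{x(y-1)+1}\,x^n, $$
as claimed.

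I expect the only genuine obstacle to be the justification of the binomial relation $b_{n,j} = \sum_{k}\binom{k}{j} f_{n,k}$: one must confirm that $u(x,z)$ enumerates permutations with an \emph{arbitrary} chosen subset of their $\meshpata$-occurrences marked (and that each such marked configuration is counted exactly once), so that passing from ``some marked'' to ``all counted'' is exactly binomial inversion. This rests on the well-definedness and bijectivity already established in Proposition~\ref{proposition-permutation-bijection}; once it is in place, the sieve identity and the substitution are routine.
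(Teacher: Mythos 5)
Your proof is correct and takes essentially the same approach as the paper: both apply the sieve principle to the previous corollary's $u(x,z)$, observing that a permutation with $k$ total occurrences of $\meshpata$ appears $\binom{k}{j}$ times with $j$ of them marked, and then performing the substitution $z = y-1$. The only difference is presentational—you carry out the binomial inversion explicitly at the coefficient level via $\sum_{j=0}^{k}\binom{k}{j}(y-1)^j = y^k$, whereas the paper phrases the same relation as $u(x,z) = f(x,z+1)$ (marking/unmarking) followed by the reverse substitution.
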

\begin{proof}
  The previous corollary gives that permutations with respect to
  marked occurrences of $\meshpata$ are given by the ordinary
  generating function
  $$u(x,z) =\sum_{n \geq 0} \qfact{n}{xz+1}x^n.$$

  In this set a permutation with $k$ marked occurrences of $\meshpata$
  occurs a total of $k \choose i$ times with $i$ occurrences of
  $\meshpata$ marked.

  Let $f(x,y)$ be the ordinary generating function for permutations
  with \emph{all} occurrences of $\meshpata$ marked. Consider the
  substitution of $y$ by $z+1$. This corresponds to remarking
  occurrences of $\meshpata$ with a $z$, or unmarking them them with
  the $1$. As such each permutation will occur $k \choose i$ times
  with $i$ occurrences of $\meshpata$ now marked by $z$. We then have
  that
  $$u(x,z) = f(x,z+1).$$
    
  The result then follows through the reverse substitution of $z$ by
  $y-1$ into $u(x,z)$.
\end{proof}

\begin{remark}
  The distributions of
 \begin{center}
   \pattern{}{3}{1/3,2/2,3/1}{1/0,1/1,1/2,1/3,0/1,2/1,3/1}
   and
   \pattern{}{3}{1/1,2/3,3/2}{1/0,1/1,1/2,1/3,0/1,2/1,3/1}
  \end{center}
 given by Eriksen and Sj\"{o}strand~\cite{equidistributed} can be
 shown in a near identical manner. Again the key is to note that in
 occurrences of these patterns each has a point whose value and
 position are uniquely determined by the other points and that
 together these other two points form an inversion.
\end{remark}

  \section{Technique}

We can generalize the previous two corollaries to explicitly state a 
new technique for constructing Fishburn structures. We present it as
the following theorem.

\begin{theorem}\label{theorem-q-factorial-construction}      
  Let $\F$ be a Mahonian stucture according to the distribution of
  some $q$-\feature{}.
  
  $\F$ follows the Fishburn distribution with respect to some feature
  $p$ if we can show there is a bijection between $\F$ structures of
  size $n$ with $i$ marked $q$-\features{} and $\F$ structures of size
  $n+i$ with $i$ marked $p$-\features{}.
\end{theorem}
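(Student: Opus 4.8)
The plan is to abstract the argument already carried out in the two corollaries of the previous section and show that the hypothesised bijection is exactly the combinatorial input needed to perform the two algebraic substitutions that take the $q$-factorial to the Fishburn generating function. I would begin by recalling that, since $\F$ is Mahonian according to some $q$-\feature{}, the bivariate generating function tracking size by $x$ and number of $q$-\features{} by $q$ is $\sum_{n\geq 0}\qfact{n}{q}x^n$. The first step is to use the supposed bijection to pass to a \emph{marked} generating function. The bijection sends $\F$ structures of size $n$ with $i$ marked $q$-\features{} to $\F$ structures of size $n+i$ with $i$ marked $p$-\features{}; the size shift by $i$ means that each marked $q$-\feature{}, tracked by a variable $z$, must additionally carry a factor of $x$ to account for the growth in size. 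I would argue that this is precisely the substitution $q \mapsto xz+1$, where the $1$ records an unmarked $q$-\feature{} and the $xz$ records a marked one that both contributes a marked $p$-\feature{} (via $z$) and increments the size (via $x$), exactly as in the first corollary. This yields
\begin{align*}
  u(x,z) = \sum_{n\geq 0}\qfact{n}{xz+1}\,x^n
\end{align*}
as the generating function for $\F$ structures with \emph{some} subset of $p$-\features{} marked.

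The second step is the sieve, identical to the second corollary. A structure with $k$ occurrences of the $p$-\feature{} appears $\binom{k}{i}$ times in $u(x,z)$ with $i$ of them marked, so if $f(x,y)$ denotes the generating function tracking \emph{all} occurrences of $p$ by $y$, then substituting $y = z+1$ into $f$ reconstructs every marked configuration, giving $u(x,z) = f(x,z+1)$. The reverse substitution $z = y-1$ then produces
\begin{align*}
  f(x,y) = \sum_{n\geq 0}\qfact{n}{x(y-1)+1}\,x^n,
\end{align*}
which by the identity noted in the introduction is the Fishburn distribution. I would close by invoking the inclusion--exclusion / sieve principle as stated in Wilf to justify that the binomial folding is exact.

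The main obstacle, and the point requiring the most care, is justifying that marking a $q$-\feature{} cleanly corresponds to the single substitution $q\mapsto xz+1$ at the level of the \emph{unfactored} product $\qfact{n}{q}=\prod_{i=1}^n\frac{1-q^i}{1-q}$, rather than just for the permutation instance already treated. In the permutation corollaries this was transparent because inversions are literally counted by the $q$-factorial and the insertion added exactly one entry per marked inversion; here I must phrase the argument purely in terms of the hypothesised bijection, so I would emphasise that the bijection being size-preserving up to the explicit $+i$ shift is exactly what licenses attaching one factor of $x$ to each $z$. The second delicate point is that the sieve step requires each marked configuration of $i$ $p$-\features{} inside a structure with $k$ total to arise in exactly one way from the marked generating function; this is guaranteed because the marking is a genuine choice of subset, but I would state it explicitly since the whole argument is now at the level of abstract \features{} rather than a concrete insertion map. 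No genuinely new computation is needed beyond what the two corollaries already established; the content of the theorem is the recognition that those two substitutions depend only on the existence of the stated bijection.
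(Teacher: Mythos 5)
Your proposal is correct and follows essentially the same route as the paper's proof: the paper likewise interprets the hypothesised bijection as the substitution $q \mapsto xz+1$ (it merely splits this into $q \mapsto w+1$ followed by $w \mapsto xz$, with $w$ an intermediate marking variable), and then applies the identical sieve step $z \mapsto y-1$ justified by the $\binom{k}{i}$ multiplicity and Wilf's sieve principle.
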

\begin{proof}
  By definition, the distribution of $q$-features in $\F$ follows the
  ordinary generating function
  $$\sum_{n \geq 0} \qfact{n}{q} x^n.$$
  
  Take $\F$ with subsets of $q$-\features{} marked by some variable
  $w$. As a $q$-\feature{} is either marked or it is not then the
  generating function for such structures is given by the substitution
  of $q$ by $w+1$ into the previous equation. We therefore have
  $$\sum_{n \geq 0} \qfact{n}{w+1} x^n.$$
  
  We now use that there exists a bijection between $\F$ structures
  of size $n$ with $i$ marked $q$-\features{} and $\F$ structures of
  size $n+i$ with $i$ marked $p$-\features{}.  This allows that
  subsets of $q$-\features{} marked with $w$ can be taken to subsets
  of $p$-\features{} marked by $z$ with the inclusion of an
  additional element. In terms of generating function this
  corresponds to the substitution of $w$ by $xz$.
  
  Therefore the ordinary generating function of $\F$ structures with
  subsets of marked $p$-features is 
  $$\sum_{n \geq 0} \qfact{n}{xz+1}x^n.$$
  
  If subsets of $p$-\features{} are marked, then each $\F$ structure
  occurs $i \choose j$ times with $j$ marked $p$-\features.  By the
  sieve principle (see Wilf ~\cite[Chapter 4, Section
    2]{generatingfunctionology}), as in the previous corollary,
  through the substitution of z by $y-1$ it then follows that $\F$
  structures with \emph{all} $p$-\features{} marked are given by the
  ordinary generating function for the Fishburn distribution
  $$\sum_{n \geq 0} \qfact{n}{x(y-1)+1}x^n.$$
\end{proof}

  \newcommand{\confused}{confused}
\newcommand{\unconfuse}{unconfuse}
\section*{Zero alignment matchings}

In this section we apply Theorem
\ref{theorem-q-factorial-construction} to identify a new Fishburn
statistic on a subset of matchings. Explicitly, matchings with zero
alignments follow the Fishburn distribution according to the number of
arcs which are both left-nesting and right-crossed.

Recall that two arcs $(i,j)$ and $(k,\ell)$ are an alignment if $i < j
< k < \ell$.

A matching with no alignments (a zero alignment matching) is
equivalently characterized as one where \emph{all} the openers in the
diagrammatic representation occur before \emph{all} the closers.

\begin{proposition}
 There are $n!$ zero alignment matchings of semi-length $n$.

 Furthermore they are enumerated by $\qfact{n}{}$ when refined
 according to the number of nestings.
\end{proposition}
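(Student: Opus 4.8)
The plan is to exploit the characterization already recorded in the preamble to the proposition: in a zero alignment matching \emph{all} $n$ openers occupy positions $1, \dots, n$ and \emph{all} $n$ closers occupy positions $n+1, \dots, 2n$. First I would make explicit the bijection with $\sym_n$ that this forces. Label the openers by their positions $1, \dots, n$, and label the closers by rank, so that the closer at position $n+r$ is the $r$th closer. A zero alignment matching is then completely determined by the map sending each opener to the rank of the closer it is joined to; since this is a bijection from $[n]$ to itself, it is a permutation $\pi \in \sym_n$. Conversely, every $\pi \in \sym_n$ yields a unique zero alignment matching by joining the opener at position $a$ to the closer at position $n + \pi(a)$. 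This correspondence is manifestly invertible, which establishes the first claim that there are $n!$ zero alignment matchings of semi-length $n$.

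For the refinement I would translate nestings, counted as nested pairs of arcs, into a statistic on $\pi$. Consider the two arcs joining opener $a$ to position $n + \pi(a)$ and opener $b$ to position $n + \pi(b)$, and take $a < b$. Because every opener precedes every closer we always have $b \leq n < n + \pi(a)$, so the two arcs can never form an alignment and must therefore either cross or nest. The arc from $a$ encloses the arc from $b$ precisely when its closer lies further to the right, that is, when $n + \pi(b) < n + \pi(a)$, equivalently $\pi(a) > \pi(b)$. Hence the pair $(a,b)$ contributes a nesting exactly when $a < b$ and $\pi(a) > \pi(b)$, which is precisely the condition for $(a,b)$ to be an inversion of $\pi$. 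Summing over all pairs, the number of nestings of the matching equals $\inv(\pi)$.

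Since the number of inversions of permutations in $\sym_n$ is distributed according to $\qfact{n}{}$ (as recorded earlier for the pattern consisting of two crossing points), the bijection above transports this distribution verbatim to the number of nestings in zero alignment matchings, which gives the second claim. The step needing the most care is the biconditional in the second paragraph: one must check both that a nesting forces an inversion and that no other configuration can masquerade as a nesting under the correspondence. This is exactly where the all-openers-before-all-closers structure does the work, since it rules out alignments a priori and forces every non-nesting pair to cross, so the nesting count picks out precisely the inversions and nothing more.
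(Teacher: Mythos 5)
Your proof is correct, but it takes a different route from the paper's. The paper proceeds recursively via inversion tables: starting from the empty matching, the $i$th step reads an entry $b_i \in [0,i-1]$, labels the candidate opener slots right-to-left, and inserts a new arc whose closer sits at the rightmost position, observing that $b_i$ records exactly the number of arcs nested by the new arc; the Mahonian refinement then falls out because the nesting count decomposes additively as the sum of the inversion table entries. You instead give a single closed-form bijection with $\sym_n$ (opener at position $a$ joined to the closer of rank $\pi(a)$) and prove the statistic correspondence globally, using the trichotomy that in an all-openers-first matching any two arcs must nest or cross, never align, so nesting pairs are exactly the pairs $a<b$ with $\pi(a)>\pi(b)$, i.e.\ $\inv(\pi)$. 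The two arguments are linked by the standard correspondence between permutations and inversion tables (under which $\inv$ becomes the sum of entries), so they encode essentially the same bijection, but yours is self-contained and makes the equality ``nestings $=$ inversions'' explicit in one step, whereas the paper's insertion-by-insertion formulation is chosen to mirror the arc-insertion and element-insertion constructions used in its later sections (and reuses the inversion-table machinery that reappears for factorial posets). One small dependency to flag: your surjectivity claim leans on the equivalence between ``zero alignments'' and ``all openers precede all closers,'' which the paper asserts just before the proposition; you re-derive the easy direction (no alignments among arcs built from $\pi$) but take the converse direction as given, which is legitimate here but worth acknowledging.
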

\begin{proof}
  This is easiest seen via recursion with a bijection between
  matchings with no alignments and inversion tables.  Take the empty
  matching and the empty inversion table to be in bijection.

  Let $b_1 b_2 \dots b_n$ be an inversion table with each $b_i \in
  [0,i-1]$ and $M$ the matching constructed from $b_1 b_2 \dots
  b_{n-1}$. Label the position to the left of the first closer as $0$
  and label the positions to the left of an opener right-to-left from
  $1$ to $n-1$. Insert a new arc into $M$ with opener at position
  $b_n$ and closer at the rightmost position in the matching.

  By construction inserted openers occur to the left of all the
  closers and it is easy to see that entries in the inversion table
  correspond to the number of nested arcs.
\end{proof}

Recall that a left-nesting arc is an arc $(i,j)$ such that there
exists an arc $(i+1, \ell)$ with $\ell < j$. Recall also that $(i,j)$
is right-crossed if there exists an arc $(k,j-1)$ with $k < i$. We
shall call an arc which is both left-nesting and right-crossed a
\emph{\confused{}} arc.

Define an order on embraced nested openers. We shall write embraced
nested openers as ordered pairs. Take $((i,j),k)$ and $((i',j'),k')$
where $k$ and $k'$ are closers with $(i,j)$ an arc embracing $k$ and
$(i',j')$ an arc embracing $k'$. If $k=k'$ then, without loss of
generality, assume $j<j'$ and define
$$((i',j'),k') < ((i,j),k).$$ 
Otherwise, without loss of generality, assume $k<k'$ then define
$$((i',j'),k') < ((i,j),k).$$ 

For example, take the matching $\{(1,9),(2,12),(3,10),(4,7),(5,8),(6,11)\}$.
  \begin{center} 
    \begin{tikzpicture}[line width=.5pt,scale=0.45, font=\small]
      \filldraw (1,0) circle (1.9pt); \node at (1,-0.7){1};
      \filldraw (2,0) circle (1.9pt); \node at (2,-0.7){2};
      \filldraw (3,0) circle (1.9pt); \node at (3,-0.7){3};
      \filldraw (4,0) circle (1.9pt); \node at (4,-0.7){4};
      \filldraw (5,0) circle (1.9pt); \node at (5,-0.7){5};
      \filldraw (6,0) circle (1.9pt); \node at (6,-0.7){6};
      \filldraw (7,0) circle (1.9pt); \node at (7,-0.7){7};
      \filldraw (8,0) circle (1.9pt); \node at (8,-0.7){8};
      \filldraw (9,0) circle (1.9pt); \node at (9,-0.7){9};
      \filldraw (10,0) circle (1.9pt); \node at (10,-0.7){10};
      \filldraw (11,0) circle (1.9pt); \node at (11,-0.7){11};
      \filldraw (12,0) circle (1.9pt); \node at (12,-0.7){12};
      \draw[very thin] (0.5,0) -- (12.500000,0);
      \draw (9,0) arc (0:180:4.0);
      \draw (12,0) arc (0:180:5.0);
      \draw (10,0) arc (0:180:3.5);
      \draw (7,0) arc (0:180:1.5);
      \draw (8,0) arc (0:180:1.5);
      \draw (11,0) arc (0:180:2.5);
    \end{tikzpicture}
  \end{center}

The following subset of embraced nested openers are sorted:
$$  ((2,12),4) \quad < \quad  ((1,9),4)  \quad < \quad ((2,12),3).$$

Given a matching with a subset of embraced openers marked, using the
above order, for each embraced nested opener $((i,j),k)$ insert a new
arc opening immediately to the left of the embraced nested opener $k$
and closing immediately to the right of arc closer $j$. As $i<k$ the
new arc is therefore right-crossed, furthermore as the arc with opener
$k$ is nested by $(i,j)$ it follows that the newly inserted arc left
nests the arc with opener $k$. As both right-crossed and left-nesting
the inserted arc is \confused{}.

\begin{example}
  We demonstrate on our example matching.
  \begin{center} 
    \begin{tikzpicture}[line width=.5pt,scale=0.45, font=\small]
      \filldraw (1,0) circle (1.9pt); \node at (1,-0.7){1};
      \filldraw (2,0) circle (1.9pt); \node at (2,-0.7){2};
      \filldraw (3,0) circle (1.9pt); \node at (3,-0.7){3};
      {\red \filldraw (4,0) circle (1.9pt); \node at (4,-0.7){4};}
      \filldraw (5,0) circle (1.9pt); \node at (5,-0.7){5};
      \filldraw (6,0) circle (1.9pt); \node at (6,-0.7){6};
      \filldraw (7,0) circle (1.9pt); \node at (7,-0.7){7};
      \filldraw (8,0) circle (1.9pt); \node at (8,-0.7){8};
      \filldraw (9,0) circle (1.9pt); \node at (9,-0.7){9};
      \filldraw (10,0) circle (1.9pt); \node at (10,-0.7){10};
      \filldraw (11,0) circle (1.9pt); \node at (11,-0.7){11};
      \filldraw (12,0) circle (1.9pt); \node at (12,-0.7){12};
      \draw[very thin] (0.5,0) -- (12.500000,0);
      \draw (9,0) arc (0:180:4.0);
      {\red \draw (12,0) arc (0:180:5.0);}
      \draw (10,0) arc (0:180:3.5);
      \draw (7,0) arc (0:180:1.5);
      \draw (8,0) arc (0:180:1.5);
      \draw (11,0) arc (0:180:2.5);
    \end{tikzpicture}
  \end{center}

  Consider the following nested openers marked.
  $$  ((2,12),4) \quad  ((1,9),4)  \quad  ((2,12),3)$$

  As in the example for permutations the next nested opener to be
  considered will be colored red and inserted arcs blue. Inserting a
  confused arc from the first embraced nested opener results in the
  following matching.

\begin{center} 
\begin{tikzpicture}[line width=.5pt,scale=0.45, font=\small]
 \filldraw (1,0) circle (1.9pt); \node at (1,-0.7){1};
 \filldraw (2,0) circle (1.9pt); \node at (2,-0.7){2};
 \filldraw (3,0) circle (1.9pt); \node at (3,-0.7){3};
 \filldraw (4,0) circle (1.9pt); \node at (4,-0.7){4};
{\red \filldraw (5,0) circle (1.9pt); \node at (5,-0.7){5};}
 \filldraw (6,0) circle (1.9pt); \node at (6,-0.7){6};
 \filldraw (7,0) circle (1.9pt); \node at (7,-0.7){7};
 \filldraw (8,0) circle (1.9pt); \node at (8,-0.7){8};
 \filldraw (9,0) circle (1.9pt); \node at (9,-0.7){9};
 \filldraw (10,0) circle (1.9pt); \node at (10,-0.7){10};
 \filldraw (11,0) circle (1.9pt); \node at (11,-0.7){11};
 \filldraw (12,0) circle (1.9pt); \node at (12,-0.7){12};
 \filldraw (13,0) circle (1.9pt); \node at (13,-0.7){13};
 \filldraw (14,0) circle (1.9pt); \node at (14,-0.7){14};
\draw[very thin] (0.5,0) -- (14.500000,0);
 {\red \draw (10,0) arc (0:180:4.5);}
 \draw (13,0) arc (0:180:5.5);
 \draw (11,0) arc (0:180:4.0);
 {\blue \draw (14,0) arc (0:180:5.0);}
 \draw (8,0) arc (0:180:1.5);
 \draw (9,0) arc (0:180:1.5);
 \draw (12,0) arc (0:180:2.5);
\end{tikzpicture}
\end{center}

The next two steps are as follows.

\begin{center}
\begin{tikzpicture}[line width=.5pt,scale=0.45, font=\small]
 \filldraw (1,0) circle (1.9pt); \node at (1,-0.7){1};
 \filldraw (2,0) circle (1.9pt); \node at (2,-0.7){2};
{\red  \filldraw (3,0) circle (1.9pt); \node at (3,-0.7){3};}
 \filldraw (4,0) circle (1.9pt); \node at (4,-0.7){4};
 \filldraw (5,0) circle (1.9pt); \node at (5,-0.7){5};
 \filldraw (6,0) circle (1.9pt); \node at (6,-0.7){6};
 \filldraw (7,0) circle (1.9pt); \node at (7,-0.7){7};
 \filldraw (8,0) circle (1.9pt); \node at (8,-0.7){8};
 \filldraw (9,0) circle (1.9pt); \node at (9,-0.7){9};
 \filldraw (10,0) circle (1.9pt); \node at (10,-0.7){10};
 \filldraw (11,0) circle (1.9pt); \node at (11,-0.7){11};
 \filldraw (12,0) circle (1.9pt); \node at (12,-0.7){12};
 \filldraw (13,0) circle (1.9pt); \node at (13,-0.7){13};
 \filldraw (14,0) circle (1.9pt); \node at (14,-0.7){14};
 \filldraw (15,0) circle (1.9pt); \node at (15,-0.7){15};
 \filldraw (16,0) circle (1.9pt); \node at (16,-0.7){16};
\draw[very thin] (0.5,0) -- (16.500000,0);
 \draw (11,0) arc (0:180:5.0);
{\red \draw (15,0) arc (0:180:6.5);}
 \draw (13,0) arc (0:180:5.0);
{\blue \draw (16,0) arc (0:180:6.0);}
{\blue \draw (12,0) arc (0:180:3.5);}
 \draw (9,0) arc (0:180:1.5);
 \draw (10,0) arc (0:180:1.5);
 \draw (14,0) arc (0:180:3.0);
\end{tikzpicture}

\begin{tikzpicture}[line width=.5pt,scale=0.45, font=\small]
 \filldraw (1,0) circle (1.9pt); \node at (1,-0.7){1};
 \filldraw (2,0) circle (1.9pt); \node at (2,-0.7){2};
 \filldraw (3,0) circle (1.9pt); \node at (3,-0.7){3};
 \filldraw (4,0) circle (1.9pt); \node at (4,-0.7){4};
 \filldraw (5,0) circle (1.9pt); \node at (5,-0.7){5};
 \filldraw (6,0) circle (1.9pt); \node at (6,-0.7){6};
 \filldraw (7,0) circle (1.9pt); \node at (7,-0.7){7};
 \filldraw (8,0) circle (1.9pt); \node at (8,-0.7){8};
 \filldraw (9,0) circle (1.9pt); \node at (9,-0.7){9};
 \filldraw (10,0) circle (1.9pt); \node at (10,-0.7){10};
 \filldraw (11,0) circle (1.9pt); \node at (11,-0.7){11};
 \filldraw (12,0) circle (1.9pt); \node at (12,-0.7){12};
 \filldraw (13,0) circle (1.9pt); \node at (13,-0.7){13};
 \filldraw (14,0) circle (1.9pt); \node at (14,-0.7){14};
 \filldraw (15,0) circle (1.9pt); \node at (15,-0.7){15};
 \filldraw (16,0) circle (1.9pt); \node at (16,-0.7){16};
 \filldraw (17,0) circle (1.9pt); \node at (17,-0.7){17};
 \filldraw (18,0) circle (1.9pt); \node at (18,-0.7){18};
\draw[very thin] (0.5,0) -- (18.500000,0);
 \draw (12,0) arc (0:180:5.5);
 \draw (16,0) arc (0:180:7.0);
{\blue \draw (17,0) arc (0:180:7.0);}
 \draw (14,0) arc (0:180:5.0);
{\blue \draw (18,0) arc (0:180:6.5);}
{\blue \draw (13,0) arc (0:180:3.5);}
 \draw (10,0) arc (0:180:1.5);
 \draw (11,0) arc (0:180:1.5);
 \draw (15,0) arc (0:180:3.0);
\end{tikzpicture}
\end{center}

It is easily checked that the inserted arcs are \confused{} and their
removal returns the original matching.
\end{example}

\begin{proposition}
  The above is a bijection between zero alignment matchings with
  marked embraced openers and zero alignment matchings with marked
  \confused{} arcs.
\end{proposition}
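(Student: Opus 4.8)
The plan is to follow the template of the proof of Proposition~\ref{proposition-permutation-bijection}. There are three things to verify: that the map is \emph{well defined}, meaning its output is again a zero alignment matching in which the newly created arcs really are \confused{} and no previously created \confused{} arc is destroyed; that it is \emph{injective}; and that it is \emph{surjective} by exhibiting an explicit step-by-step inverse.

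First I would dispatch the easy parts of well-definedness. The class of zero alignment matchings is preserved: the insertion arising from an embraced nested opener $((i,j),k)$ adds an opener immediately to the left of the opener $k$ and a closer immediately to the right of the closer $j$, so (since every opener precedes every closer in the input, in particular $k<j$) the new opener is added among the openers and the new closer among the closers, and every opener still precedes every closer. That each inserted arc is \confused{} at the moment of insertion is the observation made just before the statement: the opener immediately to its right is $k$, whose arc is nested inside $(i,j)$ and hence closes before the new closer, giving left-nesting; and the closer immediately to its left is $j$, whose arc $(i,j)$ has opener $i<k$ lying to the left of the new opener, giving right-crossing.

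The main obstacle is showing that processing the embraced nested openers in the prescribed order never destroys a \confused{} arc created earlier, and it is here that the order does all the work. I would induct on the insertions, comparing the arc $(p,q)$ inserted for $E=((i,j),k)$ with the arc $(p',q')$ inserted for a later opener $E'=((i',j'),k')$; since $E$ is processed first we have $E<E'$, which by the defining order means $k>k'$, or else $k=k'$ and $j>j'$. If $k>k'$ then the new opener $p'$ is placed to the left of $p$, so the opener immediately to the right of $p$ is untouched and left-nesting of $(p,q)$ survives, while the new closer $q'$ can only reach the slot immediately to the left of $q$ when $j'=j$, in which case $k'<k$ forces $p'<p$ and the freshly inserted arc takes over the role of right-crosser. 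If instead $k=k'$ and $j>j'$, then the new opener is placed immediately to the left of $k$, hence immediately to the right of $p$, and has closer $q'<q$ (because $j'<j$), so the fresh arc takes over as the left-nester of $(p,q)$, while the closer $j$ of $(i,j)$ remains immediately to the left of $q$ and keeps $(p,q)$ right-crossed. In every case each earlier arc retains both a left-nester and a right-crosser, and so stays \confused{}; the delicate point I would take care to spell out is exactly this transfer of the left-nesting and right-crossing roles onto the newly inserted arc.

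Injectivity is then immediate, as in the permutation case: the total order fixes the order of the insertions, and an embraced nested opener $((i,j),k)$ determines the inserted arc uniquely as the opener immediately left of $k$ joined to the closer immediately right of $j$. For surjectivity I would invert the construction one arc at a time. Given a zero alignment matching with marked \confused{} arcs, I delete the endpoints of the \confused{} arc that the order inserted \emph{last}; if this arc is $(p,q)$ then its opener-neighbour $k=p+1$ and closer-neighbour $j=q-1$ become an embraced nested opener $((i,j),k)$ of the resulting smaller matching, since right-crossing supplies an embracing arc $(i,j)$ with $i<p<k$ and left-nesting supplies that $k$'s arc closes before $q-1$. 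Repeated application recovers the original marked embraced nested openers, the fact that the remaining marked arcs stay \confused{} being the well-definedness analysis read in reverse; verifying that this reverse step really exposes a genuine embraced nested opener, rather than a spurious nesting among already-inserted arcs, is the one place where, as in the forward direction, the order must be invoked with care.
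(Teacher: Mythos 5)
Your proposal is correct and takes essentially the same approach as the paper's proof: well-definedness split into preservation of the zero-alignment property, confusedness of the newly inserted arc, and protection of earlier insertions via the order on embraced nested openers (your two cases $k>k'$ and $k=k'$, $j>j'$ are the paper's shared-closer and shared-opener cases), followed by injectivity from the total order and surjectivity by peeling off inserted arcs. Your write-up is in fact more detailed than the paper's rather terse treatment, particularly in tracking how the left-nesting and right-crossing roles transfer to newly inserted arcs and in spelling out the inverse map.
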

\begin{proof}
  We are required to show that at each stage the process is well
  defined: that no alignments are introduced and and that no
  previously inserted \confused{} arc has its left nesting or right
  crossed attributes removed.

  That no alignment is introduced can be seen by contradiction. As the
  inserted arc has its opener to the left of an existing opener and
  its closer to the right of an existing closer no new alignment can
  be introduced if the original matching was a zero alignment
  matching.

  That each step of the process does not break the right-crossed or
  left-nesting property of a previously  inserted arc
  is given by the order on nested openers. If two inserted arcs share
  the same opener as part of their nested opener, then that both arcs
  are still left nesting is given by the order on nesting arc
  closers. If two inserted arcs share the same nesting arc closer as
  part of their nested opener, then that both arcs are right nesting
  is given by the order of the opener.

  Each inserted arc has its opener and closer uniquely determined by
  the nested opener. Furthermore it is clear that removing inserted
  arcs returns the original matching. Injectivity and surjectivy are
  thus simple. 
\end{proof}

The following corollary then results from Theorem
\ref{theorem-q-factorial-construction} and the above Proposition.

\begin{corollary}
 Zero alignment matchings with respect to \confused{} arcs follow the
 Fishburn distribution.
\end{corollary}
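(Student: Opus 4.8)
The plan is to invoke Theorem~\ref{theorem-q-factorial-construction} directly, taking $\F$ to be the class of zero alignment matchings, the $q$-\feature{} to be nestings, and the target \feature{} $p$ to be \confused{} arcs. The theorem has exactly two hypotheses, and the substance of this section has already supplied both; the work of the proof is simply to check that the pieces align. First I would dispatch the Mahonian hypothesis: the earlier Proposition of this section shows that zero alignment matchings of semi-length $n$ are enumerated by $\qfact{n}{}$ when refined by the number of nestings, so $\F$ is indeed a Mahonian structure whose $q$-\feature{} is the nesting.

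Next I would supply the bijection required by the theorem. The immediately preceding Proposition gives a bijection between zero alignment matchings with marked embraced nested openers and zero alignment matchings with marked \confused{} arcs. Since each marked embraced nested opener triggers the insertion of exactly one new arc, a matching of semi-length $n$ carrying $i$ marks is sent to a matching of semi-length $n+i$ carrying $i$ marked \confused{} arcs, which is precisely the size bookkeeping demanded by Theorem~\ref{theorem-q-factorial-construction}.

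The single point that needs care---and the only place I expect any friction---is reconciling the \feature{} appearing in the Mahonian count (a nesting) with the \feature{} that is marked in the bijection (an embraced nested opener). I would resolve this by noting that the two are in natural bijection: a nesting is a pair consisting of an enclosing arc $(i,j)$ together with an arc it nests, whereas an embraced nested opener $((i,j),k)$ records that same enclosing arc together with the opener $k$ of the nested arc, and the nested arc is determined by its opener. Hence marking $i$ nestings is the same as marking $i$ embraced nested openers, so the $q$-\feature{} of the first hypothesis coincides with the marked $q$-\feature{} of the bijection. With both hypotheses met and the features identified, Theorem~\ref{theorem-q-factorial-construction} applies and the chain of substitutions $q\mapsto w+1$, $w\mapsto xz$, $z\mapsto y-1$ carries $\sum_{n\geq 0}\qfact{n}{q}x^n$ to the Fishburn generating function $\sum_{n\geq 0}\qfact{n}{x(y-1)+1}x^n$, now tracking \confused{} arcs, giving the claim.
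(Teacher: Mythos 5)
Your proposal is correct and takes essentially the same route as the paper: the paper obtains this corollary by combining Theorem~\ref{theorem-q-factorial-construction} with the two propositions of this section (Mahonian enumeration by nestings, and the bijection from marked embraced nested openers to marked confused arcs), exactly as you do. Your explicit identification of nestings with embraced nested openers is a detail the paper leaves implicit, and it is the correct way to reconcile the feature counted in the Mahonian proposition with the feature marked in the bijection.
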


  \section{Factorial posets}

Identifying appropriate statistics and applying the technique given by
Theorem \ref{theorem-q-factorial-construction} to the factorial posets
of Claesson and Linusson~\cite{factorial-matchings} allows for a new
method for the enumeration of interval orders which differs from both
the recursive construction of Bousquet-M\'elou
\etal~\cite{two-plus-two-free} and the matrix hook bijection of Dukes,
Jel\'inek and Kubitzke~\cite{comp-matrices-spec}.

Claesson and Linusson~\cite{factorial-matchings} define the
\emph{factorial posets}, a set of labeled interval orders counted by
$n!$, as follows. A factorial poset $P$ on some linearly ordered
underlying set $U$ is a naturally labeled poset with the additional
condition that, for $i,j,k \in U$,
$$i <_U j <_P k \implies i <_P k.$$
This is referred to as the \emph{factorial condition}.

Easily seen to be equivalent, a poset is factorial if and only if for
all $k \in P$ there exists $j \in [0,k-1]$ such that $\Pre{k} =
[1,j]$. As the predecessor sets can be linearly ordered by inclusion
it follows that factorial posets are interval orders.

Claesson and Linusson take advantage of this by using entries of an
inversion table to encode the construction of a factorial poset, thus
giving that the two structures are in bijection.  We include their
result for completeness.

\begin{theorem}[Claesson and Linusson~\cite{factorial-matchings}]\label{thm-fact-poset}
  Factorial posets on $[n]$ are in bijection with inversion tables of
  length $n$.
\end{theorem}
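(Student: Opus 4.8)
The plan is to build the bijection directly from the equivalent characterization of factorial posets noted just above the statement: a poset $P$ on $[n]$ is factorial if and only if for every $k$ the predecessor set $\Pre{k}$ is an initial segment $[1,j]$ of the underlying order for some $j \in [0,k-1]$. This characterization is the crux of the argument, since it says that each predecessor set is completely determined by the single number $\pre{k} = |\Pre{k}|$, which lives in exactly the range $[0,k-1]$ demanded of the $k$th entry of an inversion table.

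First I would define the forward map, sending a factorial poset $P$ to the sequence $b_1 b_2 \dots b_n$ with $b_k = \pre{k}$. Since the characterization gives $\Pre{k} = [1,b_k]$ with $b_k \in [0,k-1]$, this sequence is a valid inversion table of length $n$, so the map is well defined.

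Next I would define the reverse map. Given an inversion table $b_1 \dots b_n$, declare $i <_P k$ precisely when $i \leq b_k$ (equivalently $i \in [1,b_k]$), so that $\Pre{k} = [1,b_k]$ holds by construction, with the convention $[1,0] = \emptyset$. The work here is to check that this relation is a factorial poset. Irreflexivity and antisymmetry follow because $i <_P k$ forces $i \leq b_k \leq k-1 < k$, so $<_P$ refines the natural order on $[n]$; this same inequality yields natural labeling immediately. For transitivity, if $i <_P j$ and $j <_P k$ then $i \leq b_j \leq j-1 < j \leq b_k$, hence $i \leq b_k$ and so $i <_P k$. The factorial condition $i <_U j <_P k \implies i <_P k$ is verified in the same way: from $i < j$ and $j \leq b_k$ we get $i \leq b_k$, i.e.\ $i <_P k$.

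Finally I would observe that the two maps are mutually inverse, since each recovers the datum ``$\Pre{k} = [1,b_k]$'' from the single statistic $\pre{k} = b_k$ and conversely, which establishes the bijection. I expect the only step requiring genuine care to be the verification that the reverse construction yields an honest poset satisfying the factorial condition. Because the predecessor sets are forced to be nested initial segments, however, transitivity and the factorial condition both collapse to the elementary integer inequalities above, so I anticipate no real obstacle.
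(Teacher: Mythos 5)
Your proposal is correct and follows essentially the same route as the paper: both rest on the characterization that a poset is factorial if and only if each $\Pre{k}$ is an initial segment $[1,j]$ with $j \in [0,k-1]$, and both define the bijection by $b_k = \pre{k}$. The only difference is that you spell out the verification that the reverse map yields a genuine factorial poset, a detail the paper's one-line proof leaves implicit.
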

\begin{proof}
As a poset is factorial if and only if for all $k \in P$ there exists
$j \in [0,k-1]$ such that $\Pre{k} = [1,j]$. An inversion table $b_1
b_2 \dots b_n$ is given by setting $b_k$ to the value $j \in [0,k-1]$.
\end{proof}

Claesson and Linusson identify numerous statistics preserved by their
bijection. In particular that the number of \emph{incomparable pairs} in
factorial posets, defined as
$$ | \{ (i,j) \in P \times P : i \not <_P j, i <_U j \}|,$$ are
enumerated by the $q$-factorial. 

Taking two factorial posets to be equivalent if they are structurally
isomorphic, Claesson and Linusson demonstrate that posets satisfying
that for all $i \in [n-1]$
$$\pre{i} \leq \pre{(i+1)} \quad \mbox{or} \quad \suc{i} > \suc{(i+1)}$$
are unique representatives of their class. 

Again we include their result.
\begin{proposition}[Claesson and Linusson~\cite{factorial-matchings}]
\label{prop-claesson-linusson}
There is exactly one way to label a $(2+2)$-free poset such that it
satisfies 
$$\pre{i} \leq \pre{(i+1)} \quad \mbox{or} \quad \suc{i} > \suc{(i+1)}.$$
\end{proposition}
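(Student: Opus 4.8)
The plan is to treat a \emph{labelling} of the given $(2+2)$-free poset $P$ as a way of realising it as a factorial poset, i.e.\ a natural labelling for which every $\Pre{k}$ is an initial segment $[1,j]$, and to pin down the unique such realisation satisfying the displayed inequality. First I would reduce that inequality to a statement purely about $\pre{}$. The key observation is that \emph{every} factorial labelling has $\suc{1} \geq \suc{2} \geq \dots \geq \suc{n}$: factoriality gives $\Pre{w}=[1,\pre{w}]$, so $(i+1)<_P w$ forces $i+1\in\Pre{w}$, hence $\pre{w}\geq i+1$, hence $i\in\Pre{w}$, i.e.\ $\Suc{(i+1)}\subseteq\Suc{i}$ for consecutive labels. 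Since $P$ is $(2+2)$-free so is its dual, and applying the chain property recalled above (\cite{bogart}) to the dual shows the successor sets also form a chain under inclusion; in particular distinct successor sets have distinct sizes, so $\suc{i}=\suc{(i+1)}$ already forces $\Suc{i}=\Suc{(i+1)}$. Thus $\suc{i}\leq\suc{(i+1)}$ is equivalent to $\suc{i}=\suc{(i+1)}$, and the inequality is equivalent to: within every maximal block of consecutive labels carrying a common successor set, the numbers $\pre{j}$ increase weakly with $j$ (across block boundaries $\suc{}$ drops strictly, so nothing is required there).

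For existence I would exhibit the labelling by sorting the elements of $P$ with primary key $\suc{}$ (decreasing) and secondary key $\pre{}$ (increasing), elements agreeing in both keys being ordered arbitrarily, and then verify the three requirements. Naturality is immediate because $a<_P b$ forces $\Suc{b}\subsetneq\Suc{a}$ and hence $\suc{a}>\suc{b}$. The reduced condition holds by construction: across a block boundary $\suc{}$ strictly drops, and inside a block $\pre{}$ is weakly increasing. The substantial step, which I expect to be the main obstacle, is factoriality, namely that the $<_P$-predecessors of each $x$ occupy an initial segment of labels. I would prove this directly from the successor-set chain: if $y<_P x$ and $z$ precedes $y$ in the sort, then either $\suc{z}>\suc{y}$, so $\Suc{z}\supsetneq\Suc{y}\ni x$, or $\suc{z}=\suc{y}$, so $\Suc{z}=\Suc{y}\ni x$; in both cases $z<_P x$, so the predecessors of $x$ are downward closed in the label order and therefore form an initial segment.

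For uniqueness I would run the reduction backwards. Any factorial labelling already has $\suc{}$ weakly decreasing, so its elements fall into the blocks cut out by the successor sets; since the block sizes are intrinsic to $P$, both the blocks and the label intervals they fill are forced. The reduced inequality then forces $\pre{}$ to increase weakly inside each block, which determines the labelling up to reordering elements that share \emph{both} a predecessor set and a successor set. Such elements are mutually interchangeable, having identical relations to every other element, so reordering them leaves the labelled poset (equivalently, by Theorem~\ref{thm-fact-poset}, the underlying inversion table) unchanged. Hence every factorial labelling meeting the inequality yields the same labelled poset as the sort above, giving exactly one representative in each isomorphism class. The only place demanding real care is this last interchangeability point, which is precisely why ``exactly one way to label'' is to be read as exactly one resulting factorial poset.
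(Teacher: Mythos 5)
Your proof is correct and takes essentially the same approach as the paper's: both arguments rest on sorting the elements by the pairs $(\suc{i},\pre{i})$ (successor count decreasing, ties broken by predecessor count increasing) and observing that any remaining ties are indistinguishable elements, so the resulting labeled poset is unique. Your version is in fact substantially more complete than the paper's two-sentence sketch, since you also verify the points the paper leaves implicit --- that $\suc{}$ is weakly decreasing in any factorial labeling, that the successor sets form a chain, and (via the explicit factoriality check of the sorted labeling) that such a labeling exists at all.
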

\begin{proof}
A poset satisfying the above condition has that for all $i\in [n]$
the pairs
$$(\suc{i},\pre{i}) $$
are weakly decreasing on the first coordinate and weakly increasing on the
second. The factorial condition gives that for $i,j \in [n]$ the pairs
$(\suc{i},\pre{i})$ and $(\suc{j},\pre{j})$ are equal if an only if
the pairs are indistinguishable within the poset, thus giving a
canonical labeling.
\end{proof} 

We extend this notion to consider a new feature on factorial posets,
explicitly elements which \emph{fail} to satisfy this property.

\begin{definition}[Mislabeling]
Define a \mislabeling{} in a factorial poset on $[n]$ to be an $i \in
[n-1]$ such that
$$\pre{i} > \pre{(i+1)} \quad \mbox{and} \quad \suc{i} \leq \suc{(i+1)}.$$
\end{definition}

\begin{example}
 The poset
  \begin{center}
  \begin{tikzpicture}[xscale=0.5, yscale=0.7, semithick, baseline=12]
    \style;
    \node [wht] (a) at (-1,0) {};
    \node [wht] (b) at ( 0,0) {};
    \node [wht] (c) at ( 1,0) {};
    \node [wht] (d) at ( 2,0) {};
    \node [wht] (e) at (-1,1) {};
    \node [wht] (f) at ( 0,2) {};
    \draw (a) node[below=2pt] {1} -- 
          (e) node[left=2pt]  {\blue{2}} -- 
          (f) node[right=2pt] {\blue{4}};
    \draw (b) node[below=2pt] {3} -- (f);
    \draw (c) node[below=2pt] {5};
    \draw (d) node[below=2pt] {6};
  \end{tikzpicture}
  \end{center}
  
  Has set of \mislabelings{} $\{2,4\}$.
\end{example}

By definition a factorial poset with zero \mislabelings{} satisfies the
condition from Proposition \ref{prop-claesson-linusson} and is thus a
unique representative of its isomorphism class.

A consequence of the factorial condition is that if $\pre{i}>
\pre{(i+1)}$ then $i$ and $i+1$ are incomparable as if $i <_P i+1$
then the factorial condition requires that for all $\ell <_U i
\implies \ell <_P {i+1}$.  Furthermore there must exist $\ell$ such
that $ \ell <_P i$ but that $\ell \not <_P i+1$.

Therefore an equivalent condition to $\pre{i}> \pre{(i+1)}$ is that
there exists an induced subposet isomorphic to $(2+1)$ with the
following labeling
\begin{center}
  \begin{tikzpicture}[xscale=0.5, yscale=0.7, semithick, baseline=12]
    \style;
    \node [wht] (a) at (-1,0) {};
    \node [wht] (b) at ( 0,0) {};
    \node [wht] (c) at (-1,1) {};
    \draw (a) node[below=2pt] {$\ell$} -- 
          (c) node[left=2pt]  {$i$};
    \draw (b) node[below=2pt,right=2pt] {$i+1$};
  \end{tikzpicture}
\end{center}

\subsection{Sieved enumeration of interval orders}

Recall that we write incomparable pairs as $(i,j)$ with $i <_U j$ and
let $U$ be some linearly ordered set with $|U| = n$. For some $k \in
[0, n-1]$ take a poset $P$ built on the first $n-k$ elements of $U$
with $k$ marked incomparable pairs.

Define an order on incomparable pairs. Let $(i,j)$ and $(i',j')$ be
two pairs. If $j=j'$ without loss of generality assume $i <_U
i'$. Then we define
$$(i',j') < (i,j).$$
Otherwise $j \neq j'$ then without loss of generality assume
$j <_U j'$. Then we define
$$(i,j) < (i',j').$$

To illustrate, the following pairs are sorted according to the above
order.
$$(2,3)<(1,3)<(4,6)<(3,6).$$

In this order, each pair $(i,j)$ is then used to insert a new element
into the poset. This new element has predecessor set
$$ \{h \in P : h \leq_U i \},$$
and successor set
$$\Suc{j}.$$
Increment all $k \in P$ with $k \geq_U j$ to its immediate
successor in $U$, giving the newly inserted element the value $j$.

By definition this introduces an occurrence of 
\begin{center}
  \begin{tikzpicture}[xscale=0.5, yscale=0.7, semithick, baseline=12]
    \style;
    \node [wht] (a) at (-1,0) {};
    \node [wht] (b) at ( 0,0) {};
    \node [wht] (c) at (-1,1) {};
    \draw (a) node[below=2pt] {$i$} -- 
          (c) node[left=2pt]  {\blue{$j$}};
    \draw (b) node[below=2pt,right=2pt] {$j+1$};
  \end{tikzpicture}
\end{center}
into the new poset with the inserted element marked in
blue. Furthermore as the successor set of the inserted element $j$ is
equal to that of the successor set of the element now labeled $j+1$ it
follows that the newly inserted element with label $j$ is a
\mislabeling{}.

\begin{example}

  Consider our earlier factorial poset built on $[6]$.

  \begin{center}
  \begin{tikzpicture}[xscale=0.5, yscale=0.7, semithick, baseline=12]
    \style;
    \node [wht] (a) at (-2,0) {};
    \node [wht] (b) at ( 0,0) {};
    \node [wht] (c) at ( 1,0) {};
    \node [wht] (d) at ( 2,0) {};
    \node [wht] (e) at (-2,1) {};
    \node [wht] (f) at ( 0,3) {};
    \draw (a) node[below=2pt] {1} -- 
          (e) node[left=2pt]  {\red{2}} -- 
          (f) node[right=2pt] {4};
    \draw (b) node[below=2pt] {\red{3}} -- (f);
    \draw (c) node[below=2pt] {5};
    \draw (d) node[below=2pt] {6};
  \end{tikzpicture}
  \end{center}

  We consider the following incomparable pairs to be marked 
  $$(2,3)<(1,3)<(4,6)<(3,6)$$

  As the values within the pairs change at each stage we will denote
  the next incomparable pair to be used in red. Inserted elements will
  be colored blue.

  The pair $(2,3)$ specifies the new element $\ell$ to be inserted
  defined by
  $$\Pre{\ell} = \{ h \in U : h \leq_U 2 \} = \{1,2 \} \quad
  \mbox{and} \quad \Suc{\ell} = \Suc{3} = \{ 4\}.$$

  \begin{center}
  \begin{tikzpicture}[xscale=0.5, yscale=0.7, semithick, baseline=12]
    \style;
    \node [wht] (a) at (-2,0) {};
    \node [wht] (b) at ( 0,0) {};
    \node [wht] (c) at ( 1,0) {};
    \node [wht] (d) at ( 2,0) {};
    \node [wht] (e) at (-2,1) {};
    \node [wht] (f) at ( 0,3) {};

    \node [wht] (g) at (-2,2) {};

    \draw (a) node[below=2pt] {1} -- 
          (e) node[left=2pt]  {2} -- 
          (g) node[left=2pt]  {\blue{$\ell$}} --
          (f) node[right=2pt] {4};
    \draw (b) node[below=2pt] {3} -- (f);
    \draw (c) node[below=2pt] {5};
    \draw (d) node[below=2pt] {6};
  \end{tikzpicture}
  \end{center}

  All elements with label greater or equal to $3$ are incremented by
  one and the newly inserted element $\ell$ is given the label $3$.
  \begin{center}
  \begin{tikzpicture}[xscale=0.5, yscale=0.7, semithick, baseline=12]
    \style;
    \node [wht] (a) at (-2,0) {};
    \node [wht] (b) at ( 0,0) {};
    \node [wht] (c) at ( 1,0) {};
    \node [wht] (d) at ( 2,0) {};
    \node [wht] (e) at (-2,1) {};
    \node [wht] (f) at ( 0,3) {};

    \node [wht] (g) at (-2,2) {};

    \draw (a) node[below=2pt] {\red{1}} -- 
          (e) node[left=2pt]  {2} -- 
          (g) node[left=2pt]  {\blue{3}} --
          (f) node[right=2pt] {5};
    \draw (b) node[below=2pt] {\red{4}} -- (f);
    \draw (c) node[below=2pt] {6};
    \draw (d) node[below=2pt] {7};
  \end{tikzpicture}
  \end{center}

The remaining steps are as follows.

\begin{center}
  \begin{tikzpicture}[xscale=0.5, yscale=0.7, semithick, baseline=12]
    \style;
    \node [wht] (a) at (-2,0) {};
    \node [wht] (b) at ( 0,0) {};
    \node [wht] (c) at ( 1,0) {};
    \node [wht] (d) at ( 2,0) {};
    \node [wht] (e) at (-2,1) {};
    \node [wht] (f) at ( 0,3) {};

    \node [wht] (g) at (-2,2) {};
    \node [wht] (h) at (-1,1) {};

    \draw (a) node[below=2pt] {1} -- 
          (e) node[left=2pt]  {2} -- 
          (g) node[left=2pt]  {\blue{3}} --
          (f) node[right=2pt] {\red{6}};
    \draw (b) node[below=2pt] {5} -- (f);
    \draw (c) node[below=2pt] {7};
    \draw (d) node[below=2pt] {\red{8}};

    \draw (a) -- (h) node[left=1pt] {\blue{4}} -- (f);
  \end{tikzpicture}
  \hspace{0.25cm} $\longrightarrow$ \hspace{0.25cm}
  \begin{tikzpicture}[xscale=0.5, yscale=0.7, semithick, baseline=12]
    \style;
    \node [wht] (a) at (-2,0) {};
    \node [wht] (b) at ( 0,0) {};
    \node [wht] (c) at ( 1,0) {};
    \node [wht] (d) at ( 2,0) {};
    \node [wht] (e) at (-2,1) {};
    \node [wht] (f) at ( 0,3) {};

    \node [wht] (g) at (-2,2) {};
    \node [wht] (h) at (-1,1) {};
    \node [wht] (i) at (0,4) {};

    \draw (a) node[below=2pt] {1} -- 
          (e) node[left=2pt]  {2} -- 
          (g) node[left=2pt]  {\blue{3}} --
          (f) node[right=2pt] {6};
    \draw (b) node[below=2pt] {\red{5}} -- (f);
    \draw (c) node[below=2pt] {7};
    \draw (d) node[below=2pt] {\red{9}};

    \draw (a) -- (h) node[left=1pt] {\blue{4}} -- (f);
    \draw (f) -- (i) node[right=2pt] {\blue{8}};
  \end{tikzpicture}
  \hspace{0.25cm} $\longrightarrow$ \hspace{0.25cm}
  \begin{tikzpicture}[xscale=0.5, yscale=0.7, semithick, baseline=12]
    \style;
    \node [wht] (a) at (-2,0) {};
    \node [wht] (b) at ( 0,0) {};
    \node [wht] (c) at ( 1,0) {};
    \node [wht] (d) at ( 2,0) {};
    \node [wht] (e) at (-2,1) {};
    \node [wht] (f) at ( 0,3) {};

    \node [wht] (g) at (-2,2) {};
    \node [wht] (h) at (-1,1) {};
    \node [wht] (i) at (0,4) {};
    \node [wht] (j) at (-2,3) {};

    \draw (a) node[below=2pt] {1} -- 
          (e) node[left=2pt]  {2} -- 
          (g) node[left=2pt]  {\blue{3}} --
          (f) node[right=2pt] {6};
    \draw (b) node[below=2pt] {5} -- (f);
    \draw (c) node[below=2pt] {7};
    \draw (d) node[below=2pt] {10};

    \draw (a) -- (h) node[left=1pt] {\blue{4}} -- (f);
    \draw (f) -- (i) node[right=2pt] {\blue{8}};
   
    \draw (h) -- (j) node[left=2pt] {\blue{9}} -- (g);
    \draw (j) -- (b);
  \end{tikzpicture}
  \end{center}

  Thus we have returned a poset of size $10$ with set of marked
  \mislabelings{} $\{3,4,8,9\}$.

\end{example}
\begin{proposition}\label{proposition-factorial-poset-bijection}
  The procedure described above gives a bijection between factorial
  posets with marked incomparable pairs and factorial posets with
  marked \mislabelings{}.
\end{proposition}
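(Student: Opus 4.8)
The plan is to mirror the structure of the proof of Proposition~\ref{proposition-permutation-bijection}: I will check that the map is well defined, then that it is injective, and finally that it is surjective by exhibiting the reverse operation. Before anything else I would record one structural fact that makes the whole argument go through. For consecutive labels $i$ and $i+1$ the factorial condition forces $\Suc{(i+1)} \subseteq \Suc i$, since $c \in \Suc{(i+1)}$ means $i <_U (i+1) <_P c$, whence $i <_P c$. Consequently $\suc{(i+1)} \leq \suc i$ always, so a \mislabeling{} $i$, which by definition satisfies $\suc i \leq \suc{(i+1)}$, must in fact satisfy $\Suc i = \Suc{(i+1)}$. This equality is exactly the signature left behind by an insertion, and it is what will let me invert the construction.

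For well-definedness I would first confirm that each insertion keeps the poset factorial, i.e.\ that every predecessor set remains an initial segment $[1,m]$. The inserted element has predecessor set $[1,i]$ by construction. For an old element $k$: if $j <_P k$ then $\Pre{k}=[1,m_k]$ with $m_k \geq j$, and after relabelling everything of label $\geq j$ upward and adjoining the new element (which lies below $k$) this becomes $[1,m_k+1]$; otherwise $m_k < j$ and the predecessor set is unchanged. Either way it stays an initial segment. I would then recall, as the excerpt already notes, that the inserted element labelled $j$ has $\Pre{j}=[1,i]$ and $\Suc j = \Suc{(j+1)}$ with $\pre j = i > \pre{(j+1)}$, so it is a \mislabeling{}. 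The substantive part, just as in Proposition~\ref{proposition-permutation-bijection}, is to show that the order on incomparable pairs prevents a later insertion from destroying an earlier \mislabeling{}: taking $(i,j) < (i',j')$ I would split into the case $j=j'$ (so $i >_U i'$ by the tie-break) and the case $j <_U j'$, and in each case track how the second insertion shifts labels and enlarges predecessor and successor sets, verifying that the earlier inserted element retains both $\pre{(\cdot)} > \pre{(\cdot+1)}$ and $\Suc{(\cdot)} = \Suc{(\cdot+1)}$.

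Injectivity is immediate: the total order schedules the insertions unambiguously, and an incomparable pair determines the predecessor and successor sets, hence the identity, of the element it inserts. For surjectivity I would describe the reverse map. Given a factorial poset with $k$ marked \mislabelings{}, remove the marked \mislabeling{} of largest label; by the structural fact above it has $\Suc m = \Suc{(m+1)}$ and $\Pre m = [1,\pre m]$, and deleting it while standardising returns a factorial poset in which the element $m+1$ slides down to the label $m$. The pair $(\pre m, m)$ is then incomparable, since $\pre m \not<_P m$ because $\pre m > \pre{(m+1)}$, and it is precisely the marked pair whose insertion produced $m$. Repeated application recovers a factorial poset of size $n-k$ carrying $k$ marked incomparable pairs, inverting the construction.

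I expect the delicate step to be the final part of well-definedness, namely the two-case verification that the order protects previously created \mislabelings{}. Unlike the permutation setting, where one tracks only a single inserted value and a single ascent, here each insertion simultaneously relabels an entire upper set, enlarges several predecessor sets, and may place the new element inside either the predecessor set or the successor set of an element inserted earlier. The bookkeeping needed to confirm that neither the strict inequality $\pre{(\cdot)} > \pre{(\cdot+1)}$ nor the successor-set equality is disturbed is where the care is required, and ruling out exactly these bad interactions is the purpose of the chosen order.
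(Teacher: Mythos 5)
Your proposal takes essentially the same route as the paper's own proof: well-definedness split into natural labeling, the factorial condition, and preservation of earlier \mislabelings{} via the order on incomparable pairs; injectivity from the total order together with the fact that a pair uniquely determines the inserted element; and surjectivity by removing the largest-labeled \mislabeling{} and marking the corresponding incomparable pair. Your explicit structural fact that the factorial condition forces $\Suc{(i+1)} \subseteq \Suc{i}$, so that a \mislabeling{} $i$ satisfies $\Suc{i} = \Suc{(i+1)}$, is left implicit in the paper but matches its construction, and the case analysis you defer (that later insertions cannot destroy earlier \mislabelings{}) is treated just as tersely in the paper itself.
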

\begin{proof}

  We shall first show that the process described above is well
  defined. This is equivalent to showing that at each insertion the
  following properties are preserved for the resulting poset: it is
  naturally labeled, it satisfies the factorial condition and it does
  not remove any \mislabelings{} previously inserted by the
  process. 

  For the incomparable pair $(i,j)$ and newly inserted element $\ell$
  we have that $\ell$ covers all $\{ h \in P: h <_U i\}$ and thus by
  construction it is both naturally labeled and satisfies the
  factorial condition. Elements smaller than $j$ under $U$ remain
  unchanged. The newly inserted element is given the same successor
  set as the element which previously had that label and thus the
  insertion does not break the factorial condition for any element
  larger under $U$ than $j$ and also ensures that the naturally
  labeled property is preserved.

  That no previously inserted \mislabelings{} are removed by the
  process is given by the order on incomparable pairs and the
  factorial property, thus the process is a mapping between factorial
  posets with marked incomparable pairs to factorial posets with
  marked \mislabelings{}.


  Next we show that the mapping is bijective. That it is injective
  follows from the total order defined on incomparable pairs and that
  the insertion of a new element is uniquely determined by an
  incomparable pair.

  It remains to show surjectivity. The process we have defined inserts
  \mislabelings{} in order according to $U$. We can consider the
  reverse of the insertion operation taking a factorial poset with
  marked \mislabelings{} to a factorial poset with marked incomparable
  pairs.

  Given a factorial poset of size $n$ with $k$ marked \mislabelings{}
  take the \mislabeling{} with the largest value $j$ and remove it
  from the poset.  As $j$ is a mislabeling there exists $\ell <_P j$
  such that $\ell \not < j+1$. Take the largest such $\ell$ and mark
  the incomparable pair consisting of $(\ell, j+1)$. Thus we have
  returned a poset of size $n-1$ with $i-1$ mislabelings and $1$
  marked incomparable pair. Surjectivity follows from repeated
  application.

 As it is both surjective and injective the mapping is a bijection.
\end{proof} 

The following corollary then results from Theorem
\ref{theorem-q-factorial-construction} and Proposition
\ref{proposition-factorial-poset-bijection}.

\begin{corollary}
 Factorial posets follow the Fishburn distribution according to the number of \mislabelings{}.
\end{corollary}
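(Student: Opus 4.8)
The plan is to obtain the corollary as a direct instance of Theorem~\ref{theorem-q-factorial-construction}, so the work consists entirely in matching its two hypotheses to the present setting. The theorem requires, first, that the factorial posets constitute a Mahonian structure with respect to some $q$-\feature{}, and second, that there be a bijection between factorial posets of size $n$ carrying $i$ marked copies of that $q$-\feature{} and factorial posets of size $n+i$ carrying $i$ marked copies of the target \feature{}; here the latter is the \mislabeling{}. Once both are in place the conclusion is automatic.

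For the first hypothesis I would appeal to the results of Claesson and Linusson recalled above. Theorem~\ref{thm-fact-poset} places factorial posets on $[n]$ in bijection with inversion tables of length $n$, and the observation stated immediately afterwards is that this bijection carries the number of incomparable pairs to the inversion-table statistic counted by $\qfact{n}{q}$. Hence factorial posets refined by incomparable pairs are enumerated by $\sum_{n \geq 0} \qfact{n}{q}\, x^n$, which is precisely the Mahonian input demanded by Theorem~\ref{theorem-q-factorial-construction}, with the $q$-\feature{} taken to be the incomparable pair.

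For the second hypothesis I would invoke Proposition~\ref{proposition-factorial-poset-bijection}. In the construction preceding that proposition each marked incomparable pair triggers the insertion of exactly one new element, and that element was shown to be a \mislabeling{}; conversely, distinct marked pairs yield distinct marked \mislabelings{}. Consequently a factorial poset of size $n$ with $i$ marked incomparable pairs is sent to a factorial poset of size $n+i$ with $i$ marked \mislabelings{}, and the proposition asserts that this correspondence is a bijection. This is exactly the bijective hypothesis of Theorem~\ref{theorem-q-factorial-construction} with $\F$ the factorial posets, the $q$-\feature{} the incomparable pair, and the \feature{} $p$ the \mislabeling{}.

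With both hypotheses verified the corollary follows by applying Theorem~\ref{theorem-q-factorial-construction} verbatim. I do not anticipate a genuine obstacle here, since the heavy lifting was already done in Proposition~\ref{proposition-factorial-poset-bijection}; the only point needing a moment's care is the size bookkeeping, namely confirming that marking $i$ incomparable pairs corresponds to inserting exactly $i$ elements so that the $n \mapsto n+i$ shift required by the theorem holds. This is immediate from the one-element-per-pair nature of the insertion, so the proof reduces to citing the theorem and the proposition together.
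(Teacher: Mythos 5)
Your proposal is correct and matches the paper exactly: the paper derives this corollary precisely by combining Theorem~\ref{theorem-q-factorial-construction} (with the incomparable pair as the Mahonian $q$-\feature{}, citing Claesson and Linusson) with the bijection of Proposition~\ref{proposition-factorial-poset-bijection}. Your additional care over the $n \mapsto n+i$ size bookkeeping is implicit in the paper's construction but harmless to make explicit.
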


Substitution of $y=0$ into the ordinary generating function for the
Fishburn distribution,
  $$\sum_{n \geq 0} \qfact{n}{x(y-1)+1}x^n|_{y=0},$$ 
returns the ordinary
generating function for factorial posets with no \mislabelings{}.
Proposition \ref{prop-claesson-linusson} gives that such posets are
unique representatives of their isomorphism class thus yielding, as
expected, the result of Bousquet-M\'{e}lou
\etal~\cite{two-plus-two-free} that the generating function for
unlabeled interval orders is given by
 $$\sum_{n \geq 0} \qfact{n}{-x+1}x^n.$$

  \section{Fishburn distribution}

\begin{figure}
  \begin{center}
    1\\ 2\\ 6, 1\\ 24, 9\\ 120, 72, 5\\ 720, 600, 98, 1\\ 5040, 5400,
    1450, 76\\ 40320, 52920, 20100, 2200, 35\\ 362880, 564480, 279300,
    48750, 2299, 9\\ \dots \\
    \caption{Unsieved Fishburn distribution: number of structures of
      size $n$ with $i$ marked $p$-features,
      $u_{n,i}$}\label{fig-unsieved}
  \end{center}
\end{figure}

We obtain the following corollaries concerning the Fishburn
distribution from Theorem \ref{theorem-q-factorial-construction} and
its proof.
\begin{corollary}
  For some appropriate structure let $p$ be a \feature{} which follows
  the Fishburn distribution and $q$ a \feature{} which follows the
  Mahonian distribution.

  Letting $u_{n,i}$ denote the number of structures of size $n$ with
  $i$ marked $p$-\features{},
  $$\sum_{n \geq 0} \sum_{i \geq 0} u_{n,i} x^n z^i = \sum_{n \geq 0}
  \qfact{n}{xz+1} x^n,$$ we have that
  $$
    u_{n,i} = \sum_{j= i}^{n-i \choose 2} {j \choose i} m_{n-i,j}.
  $$
\end{corollary}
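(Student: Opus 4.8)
The plan is to extract the coefficient of $x^n z^i$ from both sides of the displayed generating function identity. On the left the coefficient is, by definition, exactly $u_{n,i}$, so the entire task reduces to expanding the right-hand side $\sum_{n \geq 0} \qfact{n}{xz+1} x^n$ and reading off that same coefficient.

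First I would recall that the Mahonian numbers are precisely the coefficients of the $q$-factorial, so that $\qfact{n}{q} = \sum_{j} m_{n,j}\, q^{j}$, where $j$ ranges from $0$ up to the maximal number of inversions of a permutation of size $n$, namely $\binom{n}{2}$. Substituting $q = xz+1$ and expanding each power binomially then gives $\qfact{n}{xz+1} = \sum_{j} m_{n,j}(xz+1)^{j} = \sum_{j} m_{n,j} \sum_{i} \binom{j}{i} x^{i} z^{i}$.

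Next I would reinstate the prefactor $x^{n}$ and collect terms. The monomial $x^{i} z^{i}$ arising from the binomial expansion combines with $x^{n}$ to give $x^{n+i} z^{i}$, so the summand indexed by the outer variable $n$ contributes to the coefficient of $x^{N} z^{i}$ exactly when $N = n+i$, i.e.\ when the outer index equals $N-i$. Re-indexing accordingly, the coefficient of $x^{N} z^{i}$ on the right-hand side is $\sum_{j} \binom{j}{i} m_{N-i,\,j}$.

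Finally I would pin down the summation range. Since $\binom{j}{i}$ vanishes for $j < i$ the lower limit is $j = i$, and since $m_{N-i,\,j}$ vanishes once $j$ exceeds the largest inversion count $\binom{N-i}{2}$ of a permutation of size $N-i$, the upper limit is $\binom{N-i}{2}$; renaming $N$ to $n$ yields $u_{n,i} = \sum_{j=i}^{\binom{n-i}{2}} \binom{j}{i} m_{n-i,\,j}$, as claimed. There is no genuine obstacle here, as the argument is pure coefficient extraction; the single point demanding care is the shift in the exponent of $x$ induced by the $x^{n}$ prefactor, which is precisely what turns the outer index $n$ into the $n-i$ appearing inside the Mahonian number.
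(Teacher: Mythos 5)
Your proof is correct, but it takes a different (purely algebraic) route from the paper's. You treat the displayed generating function identity as a hypothesis and simply extract the coefficient of $x^n z^i$: expand each $\qfact{n}{q}$ as $\sum_j m_{n,j}q^j$, substitute $q=xz+1$, expand binomially, and track the exponent shift produced by the prefactor $x^n$. The paper instead argues combinatorially, going back to the bijection of Theorem \ref{theorem-q-factorial-construction}: a structure of size $n$ with $i$ marked $p$-\features{} corresponds to a Mahonian structure of size $n-i$ with $i$ of its $q$-\features{} marked, and these are counted by choosing one of the $m_{n-i,j}$ structures having $j$ $q$-\features{} and then choosing $\binom{j}{i}$ of those features to mark. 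The two arguments are mirror images of one another: your binomial expansion of $(xz+1)^j$ is exactly the paper's selection of which $i$ of the $j$ features to mark, your index shift from $n$ to $n-i$ is the paper's statement that each marked feature grows the structure by one element, and your vanishing conditions ($\binom{j}{i}=0$ for $j<i$, and $m_{n-i,j}=0$ for $j>\binom{n-i}{2}$) are precisely the paper's bounds on $j$. What your version buys is self-containedness and rigor at the formal level: given the stated identity, the formula follows by coefficient extraction with no further appeal to the combinatorics. What the paper's version buys is interpretability: each summand visibly counts actual structures, which is what makes the later remark interpreting $u_{n,i}$ via primitive row Fishburn matrices natural, and it avoids taking the generating function identity as a black box by re-deriving the count directly from the bijection.
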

The first few terms of $u_{n,i}$ are shown in Figure
\ref{fig-unsieved}
\begin{proof}
  Theorem \ref{theorem-q-factorial-construction} gives that a
  $q$-factorial structure of size $n-i$ with $i$ marked
  $q$-\features{} can be extended to a structure of size $n$ with $i$
  marked $p$-\features{}.

  For a Mahonian structure of size $n-i$ with $j$ $q$-features then
  $i$ are selected. The number of $q$-factorial structures of size
  $n-i$ with $j$ $q$-features is given by Mahonian number
  $m_{n-i,j}$.

  The maximum number of $q$-\features{} a $q$-factorial structure of
  size $n-i$ can have is ${n-i \choose 2}$. Thus $j$ is bounded
  $$i \leq j \leq {n-i \choose j}.$$ 
\end{proof}

\begin{remark}
The row sums of Figure \ref{fig-unsieved} (\oeis{A179525}), i.e.\
$$ \sum_{i=0} u_{n,i},$$

have previously been studied by Jel{\'i}nek~\cite{jelinek-self-dual}
as counting \emph{primitive row Fishburn matrices}, upper-triangular,
binary non-row empty matrices, according to the sum of the entries.
Jel{\'i}nek considers such matrices as part of his work on counting
self-dual interval orders; he demonstrates a relation
between the generating functions of self-dual interval orders
enumerated by a reduced size function and primitive row Fishburn
matrices. 

We note that the coefficient of $x^nz^k$ in the refined formula
$$\sum_{n \geq 0} \qfact{n}{xz+1}x^n$$ 

can be interpreted as counting the number of primitive row Fishburn
matrices such that:
\begin{enumerate}
\item There are a total of $k$ entries in the matrix that are not the
  first to occur in their row.
\item The entries in the matrix sum to $n$.
\end{enumerate}
\end{remark}

\begin{corollary}
  Recalling that $f_{n,k}$ denotes the coefficient in the Fishburn
  distribution
  $$\sum_{n \geq 0} \sum_{k \geq 0} f_{n,k} x^n y^k = \sum_{n \geq 0}
  \qfact{n}{x(y-1)+1} x^n,$$ we have that
  \begin{align*}
    f_{n,k} &= \sum_{i=k}^{n-2} (-1)^{i+k} {i \choose k} u_{n,i} \\ & =
    \sum_{i=k}^{n-2} (-1)^{i+k} {i \choose k} \sum_{j= i}^{n-i \choose 2}
        {j \choose i} m_{n-i,j}.
  \end{align*}
\end{corollary}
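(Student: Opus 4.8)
The plan is to derive both displayed equalities by binomial inversion, using only the generating-function identity established inside the proof of Theorem~\ref{theorem-q-factorial-construction} together with the immediately preceding corollary; no new combinatorics is needed.

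First I would record the exact relationship between the two coefficient arrays. The proof of Theorem~\ref{theorem-q-factorial-construction} passes from the unsieved generating function $\sum_{n\geq 0}\qfact{n}{xz+1}x^n$ to the Fishburn generating function $\sum_{n\geq 0}\qfact{n}{x(y-1)+1}x^n$ by the single substitution $z=y-1$. In terms of the coefficients this is the statement $u(x,y-1)=f(x,y)$, equivalently $u(x,z)=f(x,z+1)$. Extracting the coefficient of $x^n z^i$ from $f(x,z+1)=\sum_{n,k}f_{n,k}\,x^n(z+1)^k$ and using $[z^i](z+1)^k=\binom{k}{i}$ gives the forward relation
$$u_{n,i}=\sum_{k\geq i}\binom{k}{i}f_{n,k}.$$

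Next I would invert this. The relation above has precisely the form to which lower-index binomial inversion applies: from $u_{n,i}=\sum_{k}\binom{k}{i}f_{n,k}$ one recovers $f_{n,k}=\sum_{i}(-1)^{i-k}\binom{i}{k}u_{n,i}$, the inversion being verified by the Vandermonde-type collapse $\sum_{i}(-1)^{i-k}\binom{i}{k}\binom{m}{i}=\binom{m}{k}(1-1)^{m-k}=[m=k]$. (Equivalently, one may expand $(y-1)^i$ by the binomial theorem in $f(x,y)=\sum_{n,i}u_{n,i}\,x^n(y-1)^i$ and collect the coefficient of $y^k$, obtaining the same alternating sum.) Since $(-1)^{i-k}=(-1)^{i+k}$, this is the first displayed equality; substituting the expression $u_{n,i}=\sum_{j=i}^{\binom{n-i}{2}}\binom{j}{i}m_{n-i,j}$ from the previous corollary then yields the second.

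It remains only to justify the range $k\le i\le n-2$ of the outer sum. The lower limit is immediate because $\binom{i}{k}=0$ for $i<k$. For the upper limit I would show that $u_{n,i}=0$ whenever $i\geq n-1$: by the previous corollary $u_{n,i}$ is supported on indices satisfying $i\le\binom{n-i}{2}$, and for $i\ge n-1$ the underlying Mahonian structure has size $n-i\le 1$, hence carries at most $\binom{n-i}{2}=0$ of the relevant $q$-\features{}, so no positive number of features can be marked. All terms with $i\ge n-1$ therefore vanish and the summation may be truncated at $n-2$. This truncation is the only point requiring genuine care; the coefficient extraction and the inversion itself are entirely routine.
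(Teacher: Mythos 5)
Your proof is correct, and its core is the same as the paper's: both rest on the relation $u(x,z)=f(x,z+1)$ coming from the sieve step in the proof of Theorem~\ref{theorem-q-factorial-construction}, combined with the previous corollary and binomial algebra. Your primary route (binomial inversion of the forward relation $u_{n,i}=\sum_{k\ge i}\binom{k}{i}f_{n,k}$) is just the inverse packaging of the paper's direct ``binomial expansion'' of $(y-1)^i$ in $f(x,y)=\sum_{n,i}u_{n,i}x^n(y-1)^i$, an equivalence you yourself note parenthetically. Where you genuinely differ is the treatment of the upper limit $n-2$: the paper's proof is silent on it, and its justification appears only in a subsequent remark, arguing combinatorially that a structure of size $n$ can carry at most $n-2$ occurrences of the Fishburn feature (an occurrence of $\meshpata$ is determined by its first entry and needs two further entries to its right). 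You instead derive the truncation internally from the formula for $u_{n,i}$: for $i\ge n-1$ the underlying Mahonian structure has size $n-i\le 1$, hence has no $q$-features to mark, so $u_{n,i}=0$. This is structure-independent and keeps the proof self-contained, which is arguably cleaner than the paper's deferral. One small caveat: for $n\le 1$ the literal claim ``$u_{n,i}=0$ whenever $i\ge n-1$'' fails ($u_{1,0}=u_{0,0}=1$), but in those degenerate cases the truncated formula itself is false ($f_{1,0}=1$, not the empty sum $0$), so this is a defect of the statement as given --- shared equally by the paper's own justification --- rather than a gap in your argument; for $n\ge 2$ everything you wrote is sound.
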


The first few terms are shown in Figure \ref{fig-sieved}
\begin{proof}

  Again for some appropriate structure let $p$ be a \feature{} which
  follows the Fishburn distribution.

  Recall from the proof of Theorem
  \ref{theorem-q-factorial-construction} that to take structures with
  subsets of $p$-\features{} marked to structures with all \emph{all}
  $p$-\features{} marked corresponds to the substitution of $y-1$ by
  $z$.

  The result then follows from the previous corollary and binomial
  expansion. 
\end{proof}

\begin{figure}
  \begin{center}
    1\\ 2\\ 5, 1\\ 15, 9\\ 53, 62, 5\\ 217, 407, 95, 1\\ 1014, 2728,
    1222, 76\\ 5335, 19180, 13710, 2060, 35\\ 31240, 142979, 146754,
    39644, 2254, 9\\ \dots \\
    \caption{Fishburn distribution, $f_{n,k}$}\label{fig-sieved}
  \end{center}
\end{figure}

\begin{remark}
In the above corollary the upper bound $n-2$ for the initial summation
is justified by noting that an occurrence of $\meshpata$ can be
uniquely determined by the first element in the occurrence and that
two more entries must follow in the permutation. Therefore there can
be no more than $n-2$ occurrences of a Fishburn statistic in a
structure of size $n$.

This is sufficient for our purposes however we note that this is
\emph{not} the least upper bound (easily checked empirically). We
leave this as an open question.
\end{remark}

\begin{question}
Is there an aesthetically pleasing expression for the least upper
bound for the value of a Fishburn statistic?
\end{question}

  \printbibliography{}

\end{document}